\newtheorem{monTheoreme}{Theorem}
\newtheorem*{ass}{Assumption A}
\newtheorem*{monassumption2}{Assumption B}
\newtheorem{deff}[monTheoreme]{Definition}
\newtheorem{lemma}[monTheoreme]{Lemma}
\newcommand{\pg}[1]{{#1}}
\numberwithin{equation}{section}
\begin{document}

\title{Harris's method for non-conservative periodic semiflows and application to some non-local PDEs}

\author{EL ABDOUNI Adil \footnote{Université Paris-Saclay, UVSQ, CNRS, Laboratoire de Mathématiques de Versailles, 78000, Versailles, France. Email: adil.el-abdouni@ens.uvsq.fr}}

\maketitle

\begin{abstract}
    In this paper we propose some \pg{Harris-like criteria}  in order
to study the long time behavior of general positive and periodic semiflows. These criteria allow us to obtain new existence results of principal eigenelements, and their exponential attractiveness. We present applications to two biological models in a space-time varying environment: a non local selection-mutation \pg{equation} and a growth-fragmentation \pg{equation}. The particularity of this article is to study some inhomogeneous problems \pg{that are periodic} in time, \pg{as it appears for instance} when the environment changes, \pg{due for instance to the seasonal cycle or circadian rhythms.} 
\end{abstract} \textbf{AMS Class. No.} 47A35,  35B40, 35B10, 35Q92, 45K05, 92D25, 45M05, 92D15.

\section*{Introduction}

Many phenomena in biology \pg{that} rely on periodic time scales appear in the literature.
We can cite for instance \cite{burger1995evolution,lande1996role,lynch1993evolution,lynch1991adaptive} for adaptive dynamics problems, or \cite{ Clairambault2009, clairambault2007mathematical,clairambault2006circadian} for circadian rhythms and cancer cells. Therefore, it is crucial to understand evolution equations with periodic coefficients in time, and more specifically the long-time behavior of such equations. In the field of ordinary differential equations (ODEs), the situation is well-established and extensively described by Floquet in \cite{floquet1883equations} for linear ODEs.
\pg{Regarding  linear partial differential equations (PDEs), the state of the art is much more limited.
The first eigenvalue problem is well understood and solved in~\cite{Hess1991} for general time-periodic parabolic operators, but much less is known about  non-local  equations.
As far as we know, the only existing studies consider  non-local  dispersal operators with time-periodic zero-order term, see~\cite{Hutson2008,Rawal2012,Shen2019,Sun2017} and the discussion at the end of the present introduction.}
\\

Thus, the aim of this paper is to examine the long time behavior of  non-local  linear PDEs $ \partial_t u(t) = A_t u(t) $ with a $T$-periodic operator $A_t$, which means that $A_{t+T}=A_t$ for all $t \geq 0 $. More precisely, we are searching for some conditions on the operators in order to have asynchronous Malthusian behaviour  $ u(t) \sim e^{\lambda_F(t-s)} \gamma_t \langle h_s,u_s \rangle $ when $t \rightarrow \infty$ for a fixed initial time $s$, where $u_s=u(s)$ is the initial condition. The coefficient $ \lambda_F$ is the so-called Floquet eigenvalue associated to the direct and dual periodic eigenelements $(\gamma_t)_{t \geq 0}$ and $(h_t)_{t\geq0}$. The solutions to the Cauchy problem related to a linear PDE can be represented using a  semiflow of linear operator. In this particular study, our focus lies on investigating the ergodic properties of positive semiflows.  We say that $(M_{s,t})_{t \geq s \geq 0}$ is a semiflows if $ M_{s,s}=Id $ and $M_{s,t}=M_{s,u}M_{u,t}$ for all $t\geq u \geq  s \geq 0$ , and  $T$-periodic if $M_{s+T,t+T}=M_{s,t}$ for all $t\geq s \geq 0$. In this article we consider  periodic semiflows $(M_{s,t})_{t \geq s \geq 0}$  of positive operators which act on a space of weighted signed measures on the left and on a space of weightedly bounded measurable functions on the right, and which enjoy the classical duality relation $\langle \mu M_{s,t} , f\rangle = \langle \mu , M_{s,t} f \rangle $. \\

The case of homogeneous environments, which means that $A_t=A$ for all times $t$, have been widely studied.    When the  semiflow  is conservative, which means that the total number of particles is preserved along time for a linear PDE, and more precisely $M_{s,t}1=1$ for all $ t \geq s \geq 0$, Harris's theorem provides conditions that ensure exponential convergence towards an invariant measure.  It holds when particles move without reproduction or death.
The method originates from the pioneering works of Doeblin \cite{doblin1940elements} and Harris \cite{harris1956existence}, and has been widely developed since then, see notably~\cite{canizomischler,hairer2011yet,MeynTweedie}. When delving into biological systems, the inclusion of birth and death processes frequently occurs, leading to the emergence of non-conservative semigroups. In such scenarios, the dynamics aren't anticipated to reach an invariant measure, but rather to exhibit Malthusian behavior as it has been studied in \cite{bansaye2020ergodic,bansaye2019non}.

\

Harris's method is a contraction approach that can be used for dealing with non-homogeneous environments,
 as it was successfully applied in~\cite{bansaye2020ergodic}.
For periodic environments, compared to spectral approaches, this method allows quantifying the rate of convergence to the periodic malthusian behavior.
The conditions required in~\cite{bansaye2020ergodic} are too much demanding for being applied to the equations we aim at studying in the present paper.
Our objective is thus to extend these conditions, in the spirit of~\cite{bansaye2019non}, in order to deal with periodic non-local equations which are set on unbounded domains.
More precisely, we aim at applying these modifications to two well-established models: selection-mutation and growth-fragmentation.

With that objective in mind, we draw inspiration from and employ techniques developed in the field of probability theory. These techniques serve as valuable tools to achieve our intended purpose. We refer to \cite{cloez2020irreducibility} for the establishment of the basis of this approach for homogeneous environments. 
The first part of this paper is to obtain some abstract results of general  periodic semiflows, and the second part  is devoted to applying those results  on the study of a  non-local  selection mutation  and growth fragmentation equation with two different methods. \\

\pg{Time-periodic selection-mutation equations raised interest in the last few years, and parabolic models have been considered and studied in~\cite{Carrere2020,figueroa2018long,iglesias2021selection,Lorenzi2015}.
In these models, the mutations are approximated by a local Laplace operator.
In contrast, we consider the  non-local  selection-mutation equation} \begin{align*}
    \partial_t u(t,x)+\partial_x u(t,x) &= \int_{\mathbb{R}}^{}{u(t,y)Q(y,dx)dy}+a(t,x)u(t,x),
\end{align*} \pg{which is the  non-local  counterpart of the model studied in~\cite{iglesias2021selection}.}
This equation models the dynamics of a population which is structured by a phenotypic trait $x \in \mathbb{R}$ in a time $t \geq 0$. Indeed,  the solution $u(t,x)$ can describe the density of traits $x$ in each time $t$ in a population, $a(t,x)$  models the balance between birth and death of traits $x$ in a time $t$ and we suppose that at each period of duration $T$ this fitness is repeated. The kernel $Q(y,dx)$ models the mutations through the birth of an individual with trait $x$ from one with trait $y$.
\pg{The drift term accounts for a shifting environment, that can be due for instance to global warming, see \cite{Berestycki2009}.}
We refer to the papers \cite{cloez2020irreducibility,CovilleHamel,CovilleLiWang} for the homogeneous case $a(t,x)=a(x)$.

Moreover we have a keen interest in growth models that adopt the structure of a fragmentation equation preserving mass, along with the inclusion of a transport component. These models are used to depict the progression of a population wherein each entity undergoes growth and division. The entities can be cells  \cite{Metz1986,Perthame2007}, polymers  \cite{Greer2006}, or similar entities and are characterized by a variable $x \geq 0$, which could represent size \cite{Doumic2021}, label \cite{banks2010label}, protein content \cite{doumic2007analysis}, length of a fungus filament~\cite{Tomasevi2022}, and so on.
\pg{Periodic environment appears for instance in the PMCA technique, which aims at amplifying the polymerization process of proteins by alternating incubation and sonication phases.
We refer for instance to \cite{Chyba2015,Coron2015} for more details and references.}
In order to model this, we study the following equation \begin{align*}
      &\partial_t u(t,x) +   \partial_x (\tau(t,x) u(t,x)) +\beta(t,x)u(t,x) =\int_{x}^{\infty} b(t,y,x) u(t,y)dy.  
\end{align*} The unknown $u(t,x)$ represents the population density at time $t$ of some particles with size $x>0$. The fragmentation kernel is of the form $b(t,x,y)= \frac{1}{x}\kappa\left( \frac{y}{x} \right) \beta(t,x)$ where $ \kappa$ is the fragmentation distribution. Each particles grows with speed $ \tau(t,x)$ and splits with rate $  \beta(t,x) $, \pg{and these rates are supposed to be periodic in time}.
\\

The following models, closely related to our models, have been widely studied in a series of paper \cite{Hutson2008,Rawal2012,Shen2019,Sun2017}.
\[-\partial_tu(t,x) + \int_\Omega K(x,y)u(t,y)\,dy + a(t,x)u(t,x) = \lambda u(t,x),\quad u(t+T,\cdot)=u(t,\cdot),\]
where $x\in\Omega$, with $\Omega\subset \mathbb{R}^N$ a bounded domain, and $a(t+T,\cdot)=a(t,\cdot)$ for all $t\geq0$. Indeed they obtain the existence of a principal eigenfunction in different cases. In~\cite{Hutson2008}, the authors treat the one dimensional case $N=1$ for $K>0$ continuous and $a$ Lipschitz. The higher dimension $N \in \mathbb{N}$ is considered in~\cite{Rawal2012}, where sufficient conditions are provided in the case of a convolutive kernel $K(x,y)=J(x-y)$ with $J\in C^1$ and $J(0)>0$. These conditions were then sharpened and somehow simplified by~\cite{Shen2019}. In~\cite{Sun2017}, the authors are interested in the case where $a=0$ on a subdomain $\Omega_0\subset\Omega$, still for any $N \in \mathbb{N}$, and a kernel $K(x,y)=J(x-y)$ with $J$ continuous and even. The main differences compared to our examples is that in those works there is no transport term involved, the domain is bounded, and the integral kernel is always independent of time, while it is time periodic in our growth-fragmentation example.
However, the dimension of the trait space can be higher than 1 in \cite{Rawal2012,Shen2019,Sun2017}, and in order to apply our method to our models, we will need to make some assumptions on the kernel and fitness terms which will be introduced in Sections 2 and 3.

\

The organization of the paper is the following. In  Section 1 , we present a contraction method and propose two sets of assumptions allowing to obtain a convergence result presented by Theorem 1. We use Assumption~B for the selection-mutation model that will be introduced in Section 2. Finally, we obtain a convergence result of a  growth-fragmentation model in Section 3, by using Assumption A.

\section{An abstract result}

We are interested in the long time behavior of positive and periodic semiflows in weighted signed measures spaces. Let us start by defining more precisely what we mean by weighted signed measures. Let $ X$ be a measurable space, for a function $V:X \rightarrow (0,\infty)$ we denote by $\mathcal{M}_+(V)$ the set of positive measures on $X$ such that  \begin{align*}
\forall \mu \in \mathcal{M}_+(X), \hspace{0.2cm} \langle \mu, V \rangle =\int_{ X }^{} V d \mu < \infty.
\end{align*}We define the space of weighted signed measures as \begin{align*}
\mathcal{M}(V)=\mathcal{M}_+(V)-\mathcal{M}_+(V).\end{align*}  It is the smallest vector space with positive cone $\mathcal{M}_+(V)$,  i.e.  an element $\mu$  of $\mathcal{M}(V)$ is the difference of two positive measures $\mu_+, \mu_- \in \mathcal{M}_+(V)$ which are mutually singular (by the  Hahn-Jordan  decomposition). For a rigorous construction we refer to Section 2 of \cite{bansaye2019non}. As we are going to use a duality approach by working on functions, we add that  an element from $ \mathcal{M}(V)$   acts on the Banach space  \begin{align*}
    \mathcal{B}(V)=\left\{ f:X \rightarrow \mathbb{R} \hspace{0.1cm} \text{measurable}, \hspace{0.2cm} \| f  \|_{\mathcal{B}(V)}=\sup_{x \in X} \frac{|f(x)|}{V(x)} < \infty \right\},\end{align*} through the following linear relation \begin{align*}
     \langle \mu ,f \rangle = \int_{ X }^{} f d \mu_+ - \int_{ X }^{} f d \mu_-, \quad \text{when} \hspace{0.1cm} \mu \in \mathcal{M}(V),f\in \mathcal{B}(V).    \end{align*}  The space $\mathcal{M}(V)$ is a Banach space for the weighted total variation norm \begin{align*}
\| \mu\|_{\mathcal{M}(V)}=  \underset{ \| f\|_{\mathcal{B}(V)} \leq 1}{\sup} \langle \mu ,f \rangle.     \end{align*}  In the case $V=1$ we will sometimes denote $\mathcal{M}(X)$ and $\mathcal{B}(X)$ instead of $ \mathcal{M}(1) $ and $\mathcal{B}(1)$. \\  Let us  recall a result proved in \cite{bansaye2019non} for a positive discrete time semigroup $(M_{n })_{n \in \mathbb{N}}=(M^n)_{n \in \mathbb{N} }$ by first writing the assumptions necessary for the result.
\makeatletter\tagsleft@true\makeatother  

\begin{ass}
There exist a function $\psi:X \rightarrow (0,\infty)$ with $\psi \leq V$, some integer $k>0$, real numbers $\beta > \alpha >0$, $\theta >0$, $(c,d) \in (0,1]^2$, a subset $K \subset X$ such that $\sup_K V / \psi < \infty$, and some probability measure $\nu $ on $ X $ suported by $K$ such that 

\begin{align} 
    M^k V \leq \alpha V + \theta \mathbb{1}_K \psi \tag{A1}\\
    M^k \psi \geq \beta \psi \tag{A2} \\
    \forall f \in  \mathcal{B}_+(V/\psi), \hspace{0.1cm} \underset{x \in K}{\inf } \frac{M^k(f \psi)(x)}{M^k \psi(x)} \geq c \langle \nu, f \rangle \hspace{0.2cm}  \tag{A3}\\
     \forall n \in \mathbb{N}, \hspace{0.1cm} \left \langle \nu , \frac{M^{nk} \psi}{\psi} \right \rangle \geq d \underset{x \in K}{\sup}\frac{M^{nk}\psi(x)}{\psi(x)} \tag{A4}.
\end{align}

\end{ass}  We note that in the conservative case, meaning that the operator $M$ verifies $M1 =1 $, the conditions (A2) and (A4) are satisfied with $\beta=d=1$ and $\psi= 1$, (A1) reads $M^k V \leq \alpha V + \theta \mathbb{1}_K$  and (A3) can be written $M^kf(x) \geq c \langle \nu , f\rangle$ for all $x \in K$ and for all $f \geq 0$.
We thus recover for stochastic semigroups the classical Harris theorem and the set of conditions in Assumption~A is an extension to the non-conservative case: (A1) is a Foster-Lyapunov condition, (A2) relaxes the conservativity assumption of Harris's theorem, (A3) is a minoration condition and (A4) is an additional assumption required for dealing with non-conservative operators.
Roughly speaking, the condition (A4) guarantees that the non-conservativeness of the semigroup does not breaks down the contraction property gained by the minoration condition.  With those assumptions, \cite[Theorem 5.5]{bansaye2019non} reads as follows.
 
\begin{monTheoreme}
\label{yo}
Let $(V, \psi)$ be a couple of measurable functions from $ X $ to $(0,\infty)$ such that $\psi \leq V$ and which satisfies Assumption A. Then there exist a unique triplet $(\gamma, h , \Lambda) \in \mathcal{M}_+(V) \times \mathcal{B}_+(V) \times \mathbb{R}$ of eigenelements of M with $\langle \gamma , h \rangle = \| h \|_{\mathcal{B}(V)}=1$ and $h >0$ i.e. satisfying \begin{align*}
    \gamma M = \Lambda \gamma \hspace{0.5cm} \text{and} \hspace{0.5cm} Mh= \Lambda h.
\end{align*} Moreover, there exists $C>0$ and $\omega >0$ such that for all $n \geq 0$   and all $\mu \in \mathcal{M}(V)$ .\begin{align*}
     \|  \Lambda^{-n} \mu M^n - \langle \mu , h \rangle \gamma \|_{\mathcal{M}(V)} \leq C e^{-n \omega}  \| \mu \|_{\mathcal{M}(V)}.
 \end{align*}
\end{monTheoreme}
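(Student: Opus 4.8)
The plan is to run the classical Harris--Doeblin scheme --- a geometric Lyapunov drift together with a minorization on a small set yields a contraction --- but in the non-conservative setting, where the total mass is not preserved, and then to read off the eigentriplet $(\gamma,h,\Lambda)$ and the convergence rate \emph{from} the contraction itself, without assuming the eigenfunction beforehand. First I would reduce to the case $k=1$ by relabelling $M^{k}$ as $M$; the general $k$ is recovered at the end from the fact that $M$ commutes with $M^{k}$ (so $\gamma M$ is again an eigenmeasure of $M^{k}$, hence proportional to $\gamma$ by uniqueness, and likewise for $h$), together with the crude two-sided bounds on $M^{j}$, $j<k$, obtained by iterating (A1).

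Next, the two structural estimates, both obtained after dividing by $\psi$. Introduce the kernel $\widetilde{M}f:=M(f\psi)/M\psi$, which is Markovian since $\widetilde{M}\mathbb{1}=\mathbb{1}$. Then (A3) says precisely that $\widetilde{M}f\ge c\langle\nu,f\rangle$ on $K$, a Doeblin minorization making $K$ a small set; and (A1) combined with the lower bound (A2) gives, for $W:=V/\psi$, the geometric Foster--Lyapunov drift $\widetilde{M}W\le(\alpha/\beta)\,W+(\theta/\beta)\mathbb{1}_{K}$ with rate $\alpha/\beta<1$, where $W$ is finite on $K$ by hypothesis. These are exactly the hypotheses of the conservative Harris theorem.

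The heart of the argument is the contraction. I would prove that there are constants $C\ge1$ and $\rho\in(0,1)$ such that, for all nonzero $\mu_{1},\mu_{2}\in\mathcal{M}_{+}(V)$ and all $n\ge0$,
\[
\Bigl\|\tfrac{\mu_{1}M^{n}}{\langle\mu_{1}M^{n},\psi\rangle}-\tfrac{\mu_{2}M^{n}}{\langle\mu_{2}M^{n},\psi\rangle}\Bigr\|_{\mathcal{M}(V)}\;\le\;C\rho^{n}\,\Bigl\|\tfrac{\mu_{1}}{\langle\mu_{1},\psi\rangle}-\tfrac{\mu_{2}}{\langle\mu_{2},\psi\rangle}\Bigr\|_{\mathcal{M}(V)}.
\]
The mechanism is the Hairer--Mattingly coupling estimate run in the weighted norm $\|\mu\|_{\varepsilon}:=\|\mu\|_{\mathcal{M}(\psi)}+\varepsilon\|\mu\|_{\mathcal{M}(V)}$ for a small $\varepsilon$: when both normalized measures put little $V$-mass outside $K$, the minorization (A3) strictly contracts the $\mathcal{M}(\psi)$-part, while otherwise the drift pulls the $V$-tail back towards $K$, and the two effects combine into a single geometric contraction after finitely many steps. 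The genuinely non-conservative difficulty --- which I expect to be the main obstacle --- is that $\psi$ is only a sub-eigenfunction (the inequality in (A2)), so the functional $\mu\mapsto\langle\mu M^{n},\psi\rangle$ used to renormalize need not behave multiplicatively along the semiflow, and one must prevent it from degenerating relative to $\sup_{K}M^{n}\psi/\psi$; this is precisely the role of (A4). It is also where the argument genuinely departs from the conservative case: since $\psi$ is not an eigenfunction, the one-step kernel $\widetilde{M}$ above is not the iterate governing the $n$-step dynamics of $M$, so the conservative Harris theorem cannot simply be invoked and a self-contained contraction must be established.

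Finally, extraction and conclusion. Taking $n\to\infty$ in the contraction (with $\mu_{2}$ replaced successively by $\mu$ and by $\mu M$) shows that $\mu M^{n}/\langle\mu M^{n},\psi\rangle$ is Cauchy in $\mathcal{M}(V)$ at geometric speed and has a limit $\bar{\gamma}$ independent of $\mu$; applying $M$ identifies $\Lambda:=\langle\bar{\gamma}M,\psi\rangle>0$ and yields $\bar{\gamma}M=\Lambda\bar{\gamma}$. Dually, (A2) and (A4) ensure that $M^{n}\psi/\Lambda^{n}$ converges to a positive function proportional to some $h$ with $Mh=\Lambda h$; one then fixes the normalization $\|h\|_{\mathcal{B}(V)}=\langle\gamma,h\rangle=1$, and uniqueness of $(\gamma,h,\Lambda)$ follows at once from the contraction. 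Writing $\Lambda^{-n}\mu M^{n}=\tfrac{\langle\mu M^{n},\psi\rangle}{\Lambda^{n}}\cdot\tfrac{\mu M^{n}}{\langle\mu M^{n},\psi\rangle}$ and proving that the first factor converges geometrically to $\langle\mu,h\rangle/\langle\gamma,\psi\rangle$ (again via (A4)) while the second converges geometrically to $\bar{\gamma}$ (via the contraction) gives the claimed bound, with $C>0$ and $\omega>0$ depending on $\rho$ and, after undoing the reduction to $k=1$, on $k$.
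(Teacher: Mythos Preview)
The paper does not prove this theorem: it is quoted verbatim as \cite[Theorem~5.5]{bansaye2019non} and used as a black box in the proof of Theorem~\ref{thm1.2}. Your sketch is a faithful outline of the strategy carried out in that reference---the $\psi$-renormalized Markov kernel, the Foster--Lyapunov drift with $W=V/\psi$, the Hairer--Mattingly weighted-norm contraction, and the use of (A4) to control the mass ratio along the orbit---so there is nothing in the present paper to compare it against.
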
   The first aim of the present paper is to derive from Theorem \ref{yo} a similar result for time continuous periodic semiflows.  We first introduce a generalisation of eigenelements for periodic semiflows. We say that $(\gamma_{s} , h_{s}, \lambda_F)_{s \geq 0}$ is a $T$-periodic Floquet family for a $T$-periodic semiflow   $(M_{s,t})_{t \geq s \geq 0}$ if for all $t \geq s \geq 0$  \begin{align*}
\gamma_{s+T}=\gamma_{s} \; \; &\text{and}  \; \; h_{t+T}=h_{t}. \\
 \gamma_{s}M_{s,t}=e^{\lambda_F(t-s)}\gamma_{t} \; \;&\text{and} \; \; M_{s,t} h_{t} =e^{\lambda_F(t-s)}h_{s}.
\end{align*} 

\makeatletter\tagsleft@false\makeatother 

\begin{monTheoreme}
\label{thm1.2}

Let $(M_{s,t})_{0 \leq s \leq t}$ be a positive T-periodic semiflow such that $M_{s_0,s_0+T}$ satisfies Assumption A for some functions $V \geq \psi > 0$ and some $s_0 \in [0,T)$.  We also suppose that $(s,t) \rightarrow \|M_{s,t}V \|_{\mathcal{B}(V )}$ is
locally bounded.  Then there exists a unique T-periodic Floquet family $(\gamma_{s} , h_{s}, \lambda_F)_{s \geq 0}\subset \mathcal{M}_+(V) \times \mathcal{B}_+(V)\times \mathbb{R}$ such that  $ \langle \gamma_{s} , h_{s} \rangle  = 1$ for all $s \geq 0$ and $\| h_{s_0} \|_{\mathcal{B}(V )}=1$,  and there exist $C \geq 1$, $\omega > 0$ such that for all $t \geq s \geq 0$ and all $\mu \in \mathcal{M}(V)$ 
\begin{align}\label{innequa}
    \| e^{-\lambda_F(t-s)} \mu M_{s,t} - \langle \mu ,h_{s} \rangle \gamma_{t} \|_{\mathcal{M}(V)} \leq Ce^{-\omega(t-s)} \|\mu  \|_{\mathcal{M}(V)}. 
\end{align}

\end{monTheoreme}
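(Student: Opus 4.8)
### Proof proposal

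The plan is to transfer the discrete-time conclusion of Theorem~\ref{yo} to the periodic continuous-time setting by applying it to the one-period propagator $M := M_{s_0,s_0+T}$, and then ``interpolating'' over the rest of a period using the semiflow and periodicity relations. First I would apply Theorem~\ref{yo} to $M$, which satisfies Assumption~A by hypothesis; this yields a triplet $(\gamma, h, \Lambda) \in \mathcal{M}_+(V) \times \mathcal{B}_+(V) \times \mathbb{R}$ with $\gamma M = \Lambda \gamma$, $M h = \Lambda h$, $\langle \gamma, h\rangle = \|h\|_{\mathcal{B}(V)} = 1$, and the geometric convergence $\|\Lambda^{-n} \mu M^n - \langle \mu, h\rangle \gamma\|_{\mathcal{M}(V)} \leq C e^{-n\omega}\|\mu\|_{\mathcal{M}(V)}$. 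Since $\gamma, h > 0$ and the bound forces $\Lambda > 0$, I set $\lambda_F := \tfrac{1}{T}\log \Lambda$. I then \emph{define} the Floquet family for $s \in [s_0, s_0 + T)$ by
\begin{align*}
\gamma_t := e^{-\lambda_F(t - s_0)} \gamma M_{s_0, t}, \qquad h_s := e^{-\lambda_F(s_0 + T - s)} M_{s, s_0+T}\, h,
\end{align*}
and extend to all $s \geq 0$ by $T$-periodicity. One checks using the semiflow property $M_{s,t} = M_{s,u}M_{u,t}$ that these are consistent: e.g. $\gamma_{s_0} = \gamma$ (since $M_{s_0,s_0} = \mathrm{Id}$), and $\gamma_{s_0 + T} = e^{-\lambda_F T}\gamma M = e^{-\lambda_F T}\Lambda \gamma = \gamma = \gamma_{s_0}$, so the periodic extension is well-defined. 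Similarly $h_{s_0} = e^{-\lambda_F T} M h = h$ and $h_{s_0 + T} = h = h_{s_0}$. The Floquet eigenrelations $\gamma_s M_{s,t} = e^{\lambda_F(t-s)}\gamma_t$ and $M_{s,t}h_t = e^{\lambda_F(t-s)}h_s$ then follow directly from the definitions and the semiflow/periodicity identities, and the local boundedness hypothesis on $(s,t)\mapsto \|M_{s,t}V\|_{\mathcal{B}(V)}$ ensures $\gamma_t \in \mathcal{M}_+(V)$ and $h_s \in \mathcal{B}(V)$ for every $s,t$, with $\|\gamma_t\|$, $\|h_s\|$, and the inverse quantities bounded uniformly over one period. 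The normalization $\langle \gamma_s, h_s\rangle = 1$ is preserved because $\langle \gamma_s, h_s\rangle = \langle \gamma_s M_{s, s_0+T}, h\rangle \cdot e^{-\lambda_F(s_0+T-s)} \cdot e^{\lambda_F(\cdots)}$ collapses to $\langle \gamma_{s_0+T}, h\rangle = \langle\gamma, h\rangle = 1$; I would then rescale $h_s$ by a periodic scalar, if needed, to also enforce $\|h_s\|_{\mathcal{B}(V)} = 1$ — this requires checking that demanding both normalizations simultaneously is compatible, which may force replacing the clean formula for $h_s$ by $h_s$ times a bounded periodic positive function, and correspondingly $\gamma_s$ by its reciprocal.

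For the convergence estimate, fix $t \geq s \geq 0$ and $\mu \in \mathcal{M}(V)$. Write $t - s = nT + r$ with $n \in \mathbb{N}$ and $r \in [0, T)$, and split $M_{s,t} = M_{s, s'} M_{s', s'+nT} M_{s'+nT, t}$ where $s'$ is chosen so that $s' \equiv s_0 \pmod T$ and the middle factor is a power of the period map: by $T$-periodicity $M_{s', s'+nT} = M_{s_0, s_0+nT} = M^n$. The two outer factors $M_{s,s'}$ and $M_{s'+nT, t} = M_{s_0, s_0 + r}$ (again by periodicity) only move time by less than one period, so by local boundedness their operator norms on $\mathcal{M}(V)$ — and the norms of $e^{-\lambda_F(\cdot)}$ times them — are bounded by a constant $C_0$ depending only on $T$, $V$, and the semiflow. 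Then
\begin{align*}
\bigl\| e^{-\lambda_F(t-s)}\mu M_{s,t} - \langle \mu, h_s\rangle \gamma_t \bigr\|_{\mathcal{M}(V)}
\end{align*}
is bounded, after inserting the factorization and using the eigenrelations to absorb the outer propagators into $\gamma_t$ and $h_s$, by $C_0 \cdot \|\Lambda^{-n}(\mu M_{s,s'}) M^n - \langle \mu M_{s,s'}, h\rangle \gamma\|_{\mathcal{M}(V)} \cdot C_0 \leq C_0^2 \cdot C e^{-n\omega}\|\mu M_{s,s'}\|_{\mathcal{M}(V)} \leq C' e^{-n\omega}\|\mu\|_{\mathcal{M}(V)}$. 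Finally $n \geq (t-s)/T - 1$, so $e^{-n\omega} \leq e^{\omega} e^{-\omega(t-s)/T}$, and setting $\omega' := \omega/T$ and adjusting the constant gives the claimed bound $C e^{-\omega'(t-s)}\|\mu\|_{\mathcal{M}(V)}$ with $C \geq 1$.

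For uniqueness, suppose $(\tilde\gamma_s, \tilde h_s, \tilde\lambda)$ is another $T$-periodic Floquet family with the stated normalization. Then $\tilde\gamma_{s_0}$ and $\tilde h_{s_0}$ are eigenelements of $M = M_{s_0, s_0+T}$ for the eigenvalue $e^{\tilde\lambda T}$, so by the uniqueness part of Theorem~\ref{yo} we get $e^{\tilde\lambda T} = \Lambda$ (hence $\tilde\lambda = \lambda_F$), $\tilde\gamma_{s_0} = \gamma$, and $\tilde h_{s_0} = h$; propagating forward with the Floquet relations and using $T$-periodicity then forces $\tilde\gamma_s = \gamma_s$ and $\tilde h_s = h_s$ for all $s$.

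The main obstacle I anticipate is \textbf{not} the convergence estimate (which is a fairly mechanical interpolation) but the bookkeeping around the two simultaneous normalizations $\langle\gamma_s, h_s\rangle = 1$ and $\|h_s\|_{\mathcal{B}(V)} = 1$: the natural definitions give one of them for free and the other only up to a bounded periodic positive scalar factor, so one must argue that rescaling $(\gamma_s, h_s) \mapsto (c_s^{-1}\gamma_s, c_s h_s)$ with $c_s := \|h_s^{\mathrm{raw}}\|_{\mathcal{B}(V)}$ preserves $T$-periodicity, keeps the eigenrelations intact (it does, since $c_s$ is the same periodic function on both sides), and keeps all the weighted norms controlled uniformly on $[0,T)$ — for which one needs lower bounds $\inf_{s\in[0,T)}\|h_s\|_{\mathcal{B}(V)} > 0$ and $\inf_{s\in[0,T)} h_s > 0$ pointwise, which in turn follow from $h > 0$ together with the dual eigenrelation and local boundedness. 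A secondary, purely technical point is ensuring all the local-boundedness hypotheses are used in the right places to pass from bounds on $\|M_{s,t}V\|_{\mathcal{B}(V)}$ to bounds on the operator norm of $M_{s,t}$ on $\mathcal{M}(V)$ over short time intervals.
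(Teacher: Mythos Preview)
Your proposal is correct and follows the same strategy as the paper: apply Theorem~\ref{yo} to the one-period map $M_{s_0,s_0+T}$, define $\gamma_s$ and $h_s$ by propagating the resulting eigenelements through the semiflow (the paper uses exactly your formulas), check periodicity and the pairing $\langle\gamma_s,h_s\rangle=1$, and then obtain~\eqref{innequa} by factoring $M_{s,t}$ as a short initial piece, a power $M^n$ of the period map, and a short final piece, using local boundedness on the short pieces and the discrete geometric decay on $M^n$. Two minor points: your choice $n=\lfloor (t-s)/T\rfloor$ need not guarantee $s'+nT\le t$---the paper instead takes $n$ maximal with $s_0+(k+n)T\le t$, so the leftover time can be up to $2T$, harmlessly absorbed into the constant---and the paper, like you, does not actually enforce $\|h_s\|_{\mathcal{B}(V)}=1$ beyond $s=s_0$, so your worry about the second normalization is well-founded but not something the paper resolves either.
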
 The result of this theorem means that for all initial data $\mu$, the trajectory on the semiflow behaves asymptotically as
$$\mu M_{s,t} \sim  \langle \mu,h_s \rangle e^{\lambda_F(t-s)} \gamma_t\qquad\text{when}\ t\to+\infty,$$
that is, an exponential growth (or decay) with parameter $\lambda_F$ aligning with a periodic profile $\gamma_t$.  Both $\lambda_F $ and $\gamma_t$ are universal, meaning they do not depend on the initial data $\mu$. The trace of the initial data is given by $h_s$ through the scalar term  $\langle \mu , h_s  \rangle $.

\begin{proof}

Applying Theorem \ref{yo} to the operator $M=M_{s_0,s_0+T}$, we get the existence of a unique triplet $(\gamma, h , \Lambda) \in \mathcal{M}_+(V) \times \mathcal{B}_+(V) \times \mathbb{R}$ of eigenelements of $M$ with $\langle \gamma , h \rangle = \| h \|_{\mathcal{B}(V)}=1$ and $h>0$, which satisfies \begin{align*}
    \gamma M_{s_0,s_0+T} = \Lambda \gamma \hspace{0.5cm} \text{and} \hspace{0.5cm} M_{s_0,s_0+T}h=\Lambda h.
\end{align*} For $k \in \mathbb{Z}$, $s \in [s_0+kT,s_0+(k+1)T] $ and $t \in [ s_0+(k-1)T,s_0+kT]$ we define $ \lambda_{F} = \frac{\log(\Lambda)}{T}$ and \begin{align*}
   h_{t}=e^{-\lambda_F (s_0+kT-t)}M_{t,s_0+kT}h   &\hspace{0.5cm} \gamma_{s}=e^{\lambda_F (s_0+kT-s)}\gamma M_{s_0+kT,s}.
\end{align*}Note that we have in particular $h_{s_0+kT}=h$ and $\gamma_{s_0+kT}=\gamma$ for all $k \in \mathbb{Z}$. Let us take $s\in (s_0+kT,s_0+(k+1)T)$, we will show that $\langle \gamma_{s} , h_{s} \rangle = \| h_{s} \|_{\mathcal{B}(V)}=1$, $h_{s+T}=h_{s}$  and  $\gamma_{t+T}=\gamma_{t}.$ \begin{align*}
    \langle \gamma_{s} , h_{s} \rangle &=  e^{- \lambda_F(s-s_0-kT)} \langle \gamma M_{s_0+kT,s}, h_{s} \rangle =  e^{-\lambda_F(s-s_0-kT)} \langle \gamma,M_{s_0+kT,s} h_{s} \rangle =e^{-\lambda_F T}\langle \gamma, M_{s_0+kT,s_0+(k+1)T} h \rangle\\
    &=\Lambda e^{-\lambda_F T} \langle \gamma  , h \rangle =1,\\
    h_{s+T}&=e^{-\lambda_F (s_0+(k+2)T-s-T)}M_{s+T,s_0+(k+2)T}h=e^{-\lambda_F (s_0+(k+1)T-s)}M_{s,s_0+(k+1)T}h=h_s,\\
    \gamma_{s+T}&=e^{\lambda_F (s_0+(k+1)T-s-T)}\gamma M_{s_0+(k+1)T,s+T}=e^{\lambda_F (s_0+kT-s)}\gamma M_{s_0+kT,s}=\gamma_s.
\end{align*} So we proved that $(\gamma_{s},h_{s},\lambda_F)_{s \geq 0}$ is a $T$-periodic Floquet family, and we will now prove the stability estimate (\ref{innequa}). We know from Theorem $1$ that there exist $C>0$ and $\omega >0$ such that for all $n \geq 0$  and for all $\mu \in \mathcal{M}(V)$ \begin{align*}
     \|  \Lambda^{-n} \mu M^n_{s_0,s_0+T} - \langle \mu , h \rangle \gamma \|_{\mathcal{M}(V)} \leq C e^{-n \omega}  \| \mu \|_{\mathcal{M}(V)}.
 \end{align*} 
 
 So, for $s=s_0$ and $t=s_0+nT$ equation (\ref{innequa}) holds. Let us take two integers $n,k \geq 0$ such that $0\leq s_0+kT-s<T$ and $0 \leq t-s_0-(k+n)T<T $ , which gives the existence of  $t'\in(0,2T)$  such that $t-s=nT+t'$. Moreover $(s,t) \rightarrow \|M_{s,t}V \|_{\mathcal{B}(V )}$ is locally bounded, so there exist $\widetilde{C}>0$ such that  \begin{align*}
  \max \left( \|M_{s,s_0+kT}V \|_{\mathcal{B}(V )}, \|M_{s_0+(k+n)T,t}V \|_{\mathcal{B}(V )} \right) \leq \widetilde{C}. 
\end{align*}  This gives
\begin{align*}
\| e^{-\lambda_F(t-s)} \mu M_{s,t} - \langle \mu ,h_{s} \rangle \gamma_{t} \|_{\mathcal{M}(V)} &= \| e^{-\lambda_F(nT+t')} \mu M_{s,s_0+kT} M^n_{s_0,s_0+T} M_{s_0+(k+n)T,t} - \langle \mu ,h_{s} \rangle \gamma_{t} \|_{\mathcal{M}(V)} \\
&=\| e^{-\lambda_F t'}\left(\Lambda^{-n}\mu M_{s,s_0+kT} M^n_{s_0,s_0+T} - \langle \mu M_{s,s_0+kT} ,h \rangle  \gamma \right) M_{s_0+(k+n)T,t} \|_{\mathcal{M}(V)}\\
 &\leq  \widetilde{C} C e^{-n \omega}e^{-\lambda_Ft'} \| \mu M_{s,s_0+kT}  \|_{\mathcal{M}(V)} \\
  &\leq  C'  e^{-\omega(t-s)/T}  \| \mu \|_{\mathcal{M}(V)}, 
\end{align*}  with  $ C'= \widetilde{C}^2 C e^{2(|\lambda_F|T+ \omega)}$, where we have used the definition of $t'$ in the last inequality , and this concludes the proof.
\end{proof} In practice the assumption (A4) might be quite complicated to verify, so we will replace it with  other  assumptions that are stronger but sometimes easier to check.
For some functions $f,g : \Omega \rightarrow \mathbb{R} $, the notation $f \lesssim g $ means that there exists a constant $C>0$ such that $f(x) \leq C g(x)$ for all $x \in \Omega$.
 \makeatletter\tagsleft@true\makeatother
\begin{monassumption2}
There exist a time $s_0 \geq 0$, a subset $K \subset X$, a time $\tau > 0 $, constants $\beta > \alpha >0$, $c\in (0,1]$ and $\theta>0$ and some probability measure $\nu $ on  $X$  supported by $K$ such that for a couple of functions $(V,\psi)$ from $X$ to $(0,\infty)$ which verifies $\psi \leq V $ on $X$ and $V \lesssim \psi $ on $K$ we have \begin{align} 
       M_{s,t}V \lesssim V \hspace{0.1cm } \text{and} \hspace{0.1cm} M_{s,t} \psi \gtrsim \psi \hspace{0.1cm} \text{on }  \hspace{0.1cm}  X \hspace{0.1cm}  \text{uniformly over} \hspace{0.1cm} s_0 \leq s \leq t \leq s_0+2\tau\tag{B0} \\
        \forall s \geq 0, \hspace{0.1cm} M_{s,s+\tau} V \leq \alpha V + \theta \mathbb{1}_K \psi  \tag{B1} \\
        \forall s \geq 0, \hspace{0.1cm}  M_{s,s+\tau} \psi \geq \beta \psi  \tag{B2}\\
        \forall f \in  \mathcal{B}_+(V/\psi) , \hspace{0.1cm} \underset{x \in K}{\inf } \frac{M_{s_0,s_0+\tau}(f \psi)(x)}{M_{s_0,s_0+\tau} \psi(x)} \geq c \langle \nu, f \rangle \tag{B3}\\
        \exists C>0, \; \forall (s,n,y) \in [s_0,s_0+\tau]\times \mathbb{N} \times K \quad \frac{M_{s_0,s_0+n\tau}\psi(y)}{M_{s,s+n\tau}\psi(y)} \leq C,  \tag{B4}
\end{align} and there exist $d > 0$ and a family of probability measures ($\sigma_{x,y})_{x,y \in K}$ over $[0,\tau]$ such that \begin{align}
     \forall f \in \mathcal{B}_+(V), \hspace{0.1cm} \forall x,y\in K,  \hspace{0.1cm}\frac{M_{s_0,s_0+\tau } f(x)}{\psi(x)} \geq d \int_{s_0}^{s_0+\tau} \frac{M_{u,s_0+\tau}f(y)}{\psi(y)}\sigma_{x,y}(du).  \tag{B5}
\end{align}
\end{monassumption2} \makeatletter\tagsleft@false\makeatother 
Condition (B5) was introduced in \cite{cloez2020irreducibility} in the non-periodic setting. In this work, it was shown that this condition holds when there exists a time $\tau>0$ such that, with (uniform with respect to $x,y\in K$) positive probability, trajectories issued at time $s_0$ from $x$ intersect at time $s_0+\tau$ the trajectories issued from $y$ at some random times $u\in [s_0,s_0+\tau]$.
Conversely, if we consider the conservative case $M_{s,t}1=1$ and $\psi=1$, then the semiflow is associated to a Markov process and the condition~(B5) ensures the existence of a coupling between the law associated with $x,s_0, \tau$ and the law associated with $y$ and a random time in $[s_0,s_0+\tau]$ in such a way the random variables are equal with probability $d$.

We present an example which explain why we need to add the condition (B4) unlike the homogeneous case presented in the paper \cite{cloez2020irreducibility} which not need (B4). Let us consider the equation
\[
 \partial_t u(t,x)+\partial_x u(t,x)=\sin(t-x)u(t,x), \qquad (t,x) \in \mathbb{R}^2. 
\]

We define the associated semiflow $(M_{s,t})_{t \geq s \geq 0}$, that is the semiflow such that the solution $u$ with initial condition $u(s,\cdot)=u_s$ is given by $u(t,\cdot)=u_sM_{s,t}$.
To obtain an explicit expression of the right action of this semiflow, we use that the function $\varphi(s,x)=M_{s,t}f(x)$ satisfies the backward dual equation
\[-\partial_s\varphi(s,t)-\partial_x\varphi(s,x)=\sin(s-x)\varphi(s,x)\]
with terminal condition $\varphi(t,x)=f(x)$.
We refer to Section~\ref{sec:selec-mut} for a rigorous justification of this fact.
Using the characteristics' method, we readily obtain the formula $M_{s,t}f(x)=f(x+t-s)e^{(t-s)\sin(s-x)}$. 
A simple computation shows that \begin{align*}
\frac{M_{s,s+kT}\psi(x)}{M_{u,u+kT} \psi(x)}=e^{k T \left(  \sin(s-x)-\sin(u-x)  \right)}.
\end{align*}   So, we clearly see that if $ \sin(s-x) > \sin(u-x) $, then $ \lim\limits_{k \rightarrow +\infty} \frac{M_{s,s+k T}\psi(x)}{M_{u,u+k T} \psi(x)}=+\infty$.

Now,  we  will prove that Assumption B implies Assumption A for $M_{s_0,s_0+T}$, by showing that Assumption B implies the existence of a constant $d>0$ such that for all $x,y \in K$ and all $n\geq 0$ \begin{align} \label{yota}
    \frac{M_{s_0,s_0+n\tau}\psi(x)}{\psi(x)} \geq d \frac{M_{s_0,s_0+n\tau} \psi(y)}{\psi(y)}.  
\end{align} It will prove (A4), because $\nu$ is a probability and $\psi$ is bounded on $K$.  Before  starting  the proof we begin with a simple lemma  which shows that we can control the ratio $\frac{M_{s,t}\psi}{M_{s,u}\psi}$ for fixed initial time $s$, uniformly in the final times $t$ and $u$ with $|t-u|$ bounded.

 \begin{lemma}\label{lemma imp} We take a $T$-periodic semiflow $(M_{s,t})_{0 \leq s \leq t }$ which verifies $(B0)$, $(B1)$ and $(B2)$. Then, there exist $C,c_1,c_2>0$ such that for any $s \geq 0$  and  $t \geq u \geq s $ such that $t-u< \tau $, we have for all $x \in K$

 \begin{align*}
  M_{s,t}V(x) \leq C M_{s,t}\psi(x) \\
 c_1 M_{s,u}\psi(x)\leq  M_{s,t}\psi(x) \leq c_2 M_{s,u}\psi(x).
 \end{align*}

\end{lemma}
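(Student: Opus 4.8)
### Proof plan for Lemma \ref{lemma imp}

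The plan is to bound the ratio $M_{s,t}\psi / M_{s,u}\psi$ on $K$ by splitting the time interval $[s,t]$ into a part running up to $u$ and a short leftover part of length $t-u<T$, and to handle the leftover part using the local bounds from $(B0)$ together with the minorization/majorization structure. First I would observe that all three inequalities are really the same kind of statement, so it suffices to produce a uniform two-sided comparison of $M_{s,t}\psi(x)$ with $M_{s,u}\psi(x)$ and of $M_{s,t}V(x)$ with $M_{s,t}\psi(x)$, for $x\in K$.

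\textbf{Step 1 (reduce to a short final interval).} Write $M_{s,t}=M_{s,u}M_{u,t}$ by the semiflow property. For the upper bound $M_{s,t}\psi \le c_2 M_{s,u}\psi$, I want to find a constant so that $M_{u,t}\psi \le c_2\,\psi$ pointwise on $X$; applying the positive operator $M_{s,u}$ to this inequality then gives $M_{s,t}\psi = M_{s,u}(M_{u,t}\psi)\le c_2 M_{s,u}\psi$ everywhere, in particular on $K$. For the lower bound I similarly want $M_{u,t}\psi\ge c_1\,\psi$ on $X$, which again propagates through $M_{s,u}$.

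\textbf{Step 2 (control the short interval of length $<T$ using periodicity and $(B0)$, $(B2)$).} The issue is that $(B0)$ is stated only for times in $[s_0,s_0+2\tau]=[s_0,s_0+2kT]$, whereas here $u$ is an arbitrary time. I would use $T$-periodicity to reduce to a fundamental domain: writing $u = u' + mT$ with $u'\in[s_0,s_0+T)$ (choosing $u'$ in the appropriate period), and $t = u + (t-u)$ with $0\le t-u<T$, periodicity gives $M_{u,t}=M_{u',u'+(t-u)}$ with $u'\in[s_0,s_0+2T)$ and $u'+(t-u)\in[s_0,s_0+3T)$; so after possibly enlarging the range slightly (or simply applying $(B0)$ on $[s_0,s_0+2\tau]$ with $\tau=kT$ and $k\ge 2$, or iterating $(B0)$ twice) we land inside the region where $M_{u',u'+(t-u)}V\lesssim V$ and $M_{u',u'+(t-u)}\psi\gtrsim\psi$ hold uniformly. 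From $M_{u,t}\psi\gtrsim\psi$ I get the constant $c_1$; from $\psi\le V$ and $M_{u,t}V\lesssim V$ I get $M_{u,t}\psi \le M_{u,t}V \lesssim V$, which is not yet $\lesssim\psi$ pointwise — but it is on $K$ where $V\lesssim\psi$, and that is all that is claimed for the $V$-vs-$\psi$ inequality. For the upper bound $M_{u,t}\psi\le c_2\psi$ I instead use that $M_{u,t}\psi \le M_{u,t}V \lesssim V$ together with a comparison $V\lesssim \psi$ that holds only on $K$; since the final inequality $M_{s,t}\psi\le c_2 M_{s,u}\psi$ is only claimed on $K$, I can afford to argue: $M_{s,t}\psi = M_{s,u}(M_{u,t}\psi)\le M_{s,u}(M_{u,t}V)$, and iterate $(B1)$ to bound $M_{u,t}V$, then use $(B0)$–$(B2)$ to relate $M_{s,u}(\mathbb 1_K\psi)$ back to $M_{s,u}\psi$ on $K$. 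More cleanly: since $(B0)$ gives $M_{u,t}\psi\gtrsim\psi$ and also $M_{u,t}V\lesssim V$, on the set where we evaluate we use $\psi\le V$ to get $M_{u,t}\psi\le M_{u,t}V\lesssim V$, and then the chain $M_{s,t}\psi\le \widetilde C\,M_{s,u}V$; finally $(B1)$ (iterated a bounded number of times over $[s,u]$) plus $(B2)$ give $M_{s,u}V\lesssim M_{s,u}\psi$ on $K$, closing the estimate.

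\textbf{Step 3 (the $V$-vs-$\psi$ inequality).} For $M_{s,t}V\le C M_{s,t}\psi$ on $K$: write $M_{s,t}V = M_{s,t'}(M_{t',t}V)$ where $t'$ is chosen so that $t-t'$ is a multiple of $T$ minus something controlled, or directly iterate $(B1)$: over the interval $[s,t]$, using $(B1)$ with step $\tau=kT$ a finite number $N$ of times (with a leftover short piece handled by $(B0)$), one gets $M_{s,t}V \le \alpha^N M_{?,?}V + (\text{geometric sum})\cdot M_{?,?}(\mathbb 1_K\psi)\lesssim V + M_{\cdot,\cdot}(\mathbb 1_K\psi)$; then applying the semiflow splitting and $(B2)$ downstairs, $M_{s,t}\psi\gtrsim \beta^{N}\psi \gtrsim$ the same order, and on $K$ one has $V\lesssim\psi$, yielding $M_{s,t}V\lesssim M_{s,t}\psi$.

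\textbf{Main obstacle.} The delicate point is that $(B0)$, $(B1)$, $(B3)$, $(B5)$ are anchored at the specific time $s_0$ or only valid in a bounded time window, while the lemma must hold for \emph{all} $s\ge 0$ and all $t\ge u\ge s$ with $t-u<T$. Exploiting $T$-periodicity to fold arbitrary times back into a fixed fundamental domain, and being careful that the short leftover interval of length $<T$ always lands in the region covered by $(B0)$ (which is why $\tau=kT$ with $k\ge 2$, giving a window of length $2kT\ge 4T>3T$, is used), is the heart of the argument; everything else is a bounded iteration of $(B1)$ and $(B2)$ and the positivity of the operators.
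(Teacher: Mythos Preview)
Your plan is correct and matches the paper's strategy: first establish $M_{s,t}V\lesssim M_{s,t}\psi$ on $K$ by iterating (B1)/(B2), then deduce the two-sided $\psi$ comparison via $M_{s,t}=M_{s,u}M_{u,t}$ with (B0) handling the short interval $[u,t]$ after reducing by $T$-periodicity. The one place where the paper is tidier than your Step~3 is that instead of expanding $M_{s,s+N\tau}V$ as $\alpha^N V$ plus a sum $\sum_j \alpha^{N-1-j}M_{s,s+j\tau}(\mathbb 1_K\psi)$ and then comparing termwise to $M_{s,s+N\tau}\psi$, the paper works directly with the ratio $R_n:=M_{s,s+n\tau}V/M_{s,s+n\tau}\psi$ and observes from (B1)--(B2) that $R_n\le(\alpha/\beta)R_{n-1}+\theta$, so $R_n\le(\alpha/\beta)^n\,V/\psi+\theta/(\beta-\alpha)$, which is bounded on $K$; this avoids having to control each summand via (B2) separately and makes the uniformity in $n$ transparent.
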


\begin{proof}
Let us fix $t\geq s$ and take $k \in \mathbb{N}$ such that $ t \in [ s+k  \tau ,s+(k+1) \tau  ] $. Let us start by proving  that there exists $C>0$ such that  $ M_{s,t}V(x) \leq C M_{s,t}\psi(x) $  for all $x \in K$. Using assumptions (B1) and (B2) we have \begin{align*}
     M_{s,s+k  \tau  }V \leq \alpha M_{s,s+(k-1)  \tau  }V +\theta M_{s,s+(k-1)  \tau  }\psi; \hspace{0.2cm}       M_{s,s+k  \tau } \psi \geq   \beta M_{s,s+(k-1) \tau } \psi,
\end{align*} which yields \begin{align*}
    \frac{ M_{s,s+k  \tau  }V }{  M_{s,s+k  \tau } \psi} \leq \frac{\alpha}{\beta} \frac{M_{s,s+(k-1) \tau  }V}{M_{s,s+(k-1) \tau } \psi}+ \theta,
\end{align*}  and we deduce by induction \begin{align*}
    \frac{ M_{s,s+k \tau }V }{  M_{s,s+k \tau } \psi}   \leq \left(\frac{\alpha}{\beta} \right)^k \frac{V}{\psi} + \frac{\theta}{\beta-\alpha}. 
\end{align*} Using (B0) and the fact that $\frac{\alpha}{\beta}\leq 1$, we finally get \begin{align} \label{1.4}
  \frac{M_{s,t}V}{M_{s,t}\psi} \lesssim   \frac{ M_{s,s+(k-1)  \tau }V }{  M_{s,s+(k-1)  \tau } \psi}  \lesssim  \frac{V}{\psi}.
\end{align} uniformly on $t \geq s \geq 0$. Now we prove that  $M_{s,t}\psi \lesssim  M_{s,u}\psi$ in $K$, uniformly in $u \in [t- \tau ,t] $. To do so, we write \begin{align*}
M_{s,t}\psi\leq M_{s,t}V =   M_{s,u}  M_{u-k  \tau ,t-k \tau  } V \lesssim M_{s,u}V \lesssim M_{s,u}\psi, 
\end{align*} where we have used (\ref{1.4}) in the last inequality.  To finish, we prove $M_{s,u}\psi \lesssim M_{s,t}\psi$, which readily follows from (B0) by writing \begin{align*}
 M_{s,t}\psi =   M_{s,u}  M_{u-k  \tau ,t-k  \tau } \psi \gtrsim M_{s,u}\psi.
\end{align*}
\end{proof}Now we have the tools to demonstrate that Assumption B implies Assumption A for $M_{s,s+T}.$\begin{monTheoreme}\label{periodicité}
  Suppose that the $T$-periodic semiflow $(M_{s,t})_{0 \leq s \leq t}$ verifies Assumption B for a time $s_0 \geq 0$ and $\tau=kT$ for an integer $k$. Then the operator $M_{s,s+T}$ verifies Assumption A.
\end{monTheoreme}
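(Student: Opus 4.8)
The plan is to apply Theorem~\ref{yo} to the operator $M:=M_{s_0,s_0+T}$. By the semiflow property and $T$-periodicity one has $M^n=M_{s_0,s_0+nT}$, so it suffices to check that $M$ satisfies Assumption~A with the integer parameter chosen to be the $k$ for which $\tau=kT$; then $M^k=M_{s_0,s_0+\tau}$ and $M^{nk}=M_{s_0,s_0+n\tau}$. With this identification, (A1) and (A2) are exactly (B1) and (B2) read at $s=s_0$, (A3) is (B3), and the remaining ingredients of Assumption~A ($\psi\le V$, $\sup_K V/\psi<\infty$, $\beta>\alpha>0$, $\theta>0$, $c\in(0,1]$, and $\nu$ a probability carried by $K$) are part of the standing hypotheses of Assumption~B. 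So the only genuine work is to exhibit $d\in(0,1]$ and to prove (A4). Since $\nu$ is a probability measure supported by $K$, (A4) will follow from~\eqref{yota}: once $M_{s_0,s_0+n\tau}\psi(x)/\psi(x)\ge d\,M_{s_0,s_0+n\tau}\psi(y)/\psi(y)$ is known for all $x,y\in K$, integrating in $x$ against $\nu$ and then taking the supremum over $y\in K$ yields (A4). The value $n=0$ forces $d\le1$ and is then trivial, so I would fix $n\ge1$ from now on.

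The first step is to apply (B5) to $f:=M_{s_0+\tau,s_0+n\tau}\psi$. This $f$ is nonnegative and $f\le M_{s_0+\tau,s_0+n\tau}V$, which lies in $\mathcal{B}(V)$ (for instance by iterating (B1) and using $\psi\le V$), so $f\in\mathcal{B}_+(V)$. Using the semiflow identity $M_{u,s_0+n\tau}=M_{u,s_0+\tau}M_{s_0+\tau,s_0+n\tau}$ for $s_0\le u\le s_0+\tau$ (and the same identity at $u=s_0$ on the left-hand side), condition (B5) rewrites as
\begin{align*}
\frac{M_{s_0,s_0+n\tau}\psi(x)}{\psi(x)}\;\ge\;d\int_{s_0}^{s_0+\tau}\frac{M_{u,s_0+n\tau}\psi(y)}{\psi(y)}\,\sigma_{x,y}(du),\qquad x,y\in K.
\end{align*}

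The core of the argument is then to bound $M_{u,s_0+n\tau}\psi(y)$ below by a fixed multiple of $M_{s_0,s_0+n\tau}\psi(y)$, uniformly over $u\in[s_0,s_0+\tau]$, $y\in K$ and $n\ge1$. I would write $M_{u,u+n\tau}=M_{u,s_0+n\tau}M_{s_0+n\tau,u+n\tau}$, use $T$-periodicity ($M_{s_0+n\tau,u+n\tau}=M_{s_0,u}$ since $n\tau=nkT$) together with $\psi\le V$ and (B0) (which gives $M_{s_0,u}\psi\le M_{s_0,u}V\le C_0V$ uniformly for $u\in[s_0,s_0+\tau]\subset[s_0,s_0+2\tau]$) to obtain $M_{u,u+n\tau}\psi\le C_0\,M_{u,s_0+n\tau}V$, and then evaluate at $y\in K$ and invoke the first bound of Lemma~\ref{lemma imp} (with constant $C_1$) to get $M_{u,u+n\tau}\psi(y)\le C_0C_1\,M_{u,s_0+n\tau}\psi(y)$. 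Finally, (B4) applied with $s=u$ gives $M_{u,u+n\tau}\psi(y)\ge C^{-1}M_{s_0,s_0+n\tau}\psi(y)$ with $C$ the constant of (B4), hence $M_{u,s_0+n\tau}\psi(y)\ge(C_0C_1C)^{-1}M_{s_0,s_0+n\tau}\psi(y)$. Inserting this into the displayed inequality and using $\int_{s_0}^{s_0+\tau}\sigma_{x,y}(du)=1$ produces
\begin{align*}
\frac{M_{s_0,s_0+n\tau}\psi(x)}{\psi(x)}\;\ge\;\frac{d}{C_0C_1C}\,\frac{M_{s_0,s_0+n\tau}\psi(y)}{\psi(y)},\qquad x,y\in K,\ n\ge1,
\end{align*}
which is~\eqref{yota} once the constant is replaced by its minimum with $1$ so as to also cover $n=0$; as explained above, this gives (A4) and hence Assumption~A for $M_{s_0,s_0+T}$.

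The step I expect to be the main obstacle is the uniform lower bound of the third paragraph: keeping the ratio $M_{u,s_0+n\tau}\psi(y)/M_{s_0,s_0+n\tau}\psi(y)$ bounded below \emph{uniformly in $n$} is precisely what degenerates for a genuinely time-dependent environment, as the example $\partial_tu+\partial_xu=\sin(t-x)u$ discussed above illustrates, and it is exactly for this that condition (B4) was introduced and that Lemma~\ref{lemma imp} and (B0) are brought in. Everything else is bookkeeping: matching the discrete-time operators $M^k$, $M^{nk}$ of Assumption~A with the continuous-time operators $M_{s_0,s_0+\tau}$, $M_{s_0,s_0+n\tau}$, and checking that $f=M_{s_0+\tau,s_0+n\tau}\psi\in\mathcal{B}_+(V)$ for each fixed $n$.
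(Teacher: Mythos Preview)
Your proposal is correct and follows essentially the same route as the paper: you reduce (A4) to~\eqref{yota}, apply (B5) with $f=M_{s_0+\tau,s_0+n\tau}\psi$, and then establish the uniform lower bound $M_{u,s_0+n\tau}\psi(y)\gtrsim M_{s_0,s_0+n\tau}\psi(y)$ via Lemma~\ref{lemma imp} and (B4). The only (cosmetic) difference is in how that last uniform bound is extracted: the paper uses the second comparison estimate of Lemma~\ref{lemma imp} to pass from the ratio $M_{s_0,t}\psi/M_{u,t}\psi$ directly to $M_{s_0,s_0+l\tau}\psi/M_{u,u+l\tau}\psi$ and then invokes (B4), whereas you route through $M_{u,u+n\tau}\psi$ using the first estimate of Lemma~\ref{lemma imp} together with (B0) before applying (B4); both are valid and equally short.
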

\begin{proof} Let us fix $s_0 \geq 0 $. By Assumption (B0), Condition (\ref{yota}) is clearly verified for $t \in [s_0,s_0+\tau]$. When $t > s_0+\tau $, using $f = M_{s_0+\tau,t} \psi(x) $ in (B5) ensures that for all $x,y \in K$ \begin{align*}
   \frac{M_{s_0,t} \psi(x)}{\psi(x)} \geq c \int_{s_0}^{s_0+\tau} \frac{M_{u,t}\psi(y)}{\psi(y)}\sigma_{x,y}(du)  .
\end{align*} So, in order to show (\ref{yota}), we need to verify \begin{align} \label{astq}
    \exists C>0, \quad \forall (u,y) \in [s_0,s_0+\tau]\times K, \quad  \frac{M_{s_0,t} \psi(y) }{ M_{u,t} \psi(y)} \leq C .
\end{align}  For $u\in [s_0,s_0+\tau]$, let us take $l=\lfloor \frac{t-s_0}{\tau} \rfloor$, so that $s_0 \leq t-l\tau < s_0+\tau$ and $ u \leq t-(l-1)\tau \leq s_0+2\tau $. By Lemma \ref{lemma imp}, we have \begin{align*}
\frac{M_{s_0,t}\psi}{M_{u,t}\psi}  \lesssim \frac{M_{s_0,s_0+l\tau}\psi}{M_{u,u+l\tau}\psi}.
\end{align*} By using (B4) we clearly obtain (\ref{astq}). Moreover, we note that Assumption B  clearly implies (A1), (A2) and (A3), which concludes the proof.

\end{proof}

\section{Application to a selection-mutation model}\label{sec:selec-mut}

We apply our method to the following non-local equation with drift 
\begin{equation}\label{yop}
\left\{\begin{array}{l}
    \displaystyle\partial_t u(t,x)+\partial_x u(t,x) = \int_{\mathbb{R}}^{}{u(t,y)Q(y,dx)dy}+a(t,x)u(t,x), \quad (t,x)\in (s,\infty) \times \mathbb{R} \vspace{2mm} \\
    u(s,x)=u_s(x). 
\end{array}\right.
\end{equation}
We suppose that $a: \mathbb{R}^2 \rightarrow \mathbb{R}$ is a continuous function on $\mathbb{R}^2$, $T$-periodic in time and $C^1$ in space such that 
\begin{align} \label{a1}
   \forall R>0, \exists C_R \in \mathbb{R},  \forall (x,t,\alpha) \in [-R,R] \times \mathbb{R} \times [0,T],  \int_{0}^{t}a(\tau+\alpha,x+\tau)d\tau \leq C_R+ \int_{0}^{t}a(\tau,x+\tau)d\tau, 
\end{align} and there exist  continuous functions $\underline{a},\overline{a}$  with $\underline{a} \leq 0$  which verify $\lim\limits_{ |x| \rightarrow \infty} \overline{a}(x)=-\infty$ and $\underline{a}(\cdot) \leq a(t,\cdot)   \leq \overline{a}(\cdot) \hspace{0.1cm} \text{for all  } \hspace{0.1cm} t \in \mathbb{R}_+ ,$  and we set $ A= \sup_{ \mathbb{R}} \overline{a} $. A function which verifies (\ref{a1}) can be for example $a(t,x)= -\displaystyle\sqrt{  |x+\sin(t) | } $. 
    
     For all $x\in \mathbb{R}$, $Q(x,\cdot)$ is a finite positive measure on $\mathbb{R}$. We write (abusively) $Q(y,dx)$ to means that $Q(y,\cdot)$ is a measure in the $x$ variable. We assume that $x \mapsto Q(x,\cdot)$ is a narrowly continuous function $\mathbb{R} \rightarrow \mathcal{M}_+(\mathbb{R})$ which satisfies \begin{align}
     \exists \epsilon, \kappa_0, \forall x \in \mathbb{R} ;\ Q(x, \cdot) \geq \kappa_0 \,\mathcal{U}_{x,\epsilon} \label{Q1}  \\
     \widehat{Q}=\sup_{x \in \mathbb{R}}Q(x,\mathbb{R}) < \infty   \label{Qpasut} \\
      \exists C_1 \in \mathbb{R}, ;\ \forall (x,\alpha) \in \mathbb{R} \times [0,T] ;\  Q(x+\alpha,\cdot) \leq C_1Q(x,\cdot) . \label{Q12}
\end{align} where $\mathcal{U}_{x,\epsilon}$ is the (non-normalized) uniform measure on $[x-\epsilon,x+\epsilon]$, namely the measure defined for any measurable subset $A$ of~$\mathbb{R}$ by $\mathcal{U}_{x,\epsilon}(A)=\mathcal{L}\left( A\cap [x-\epsilon,x+\epsilon] \right)$ with $\mathcal{L}$ the Lebesgue measure.  We recall that $ (\mu_n)_{n \geq 0}$ converges narrowly to $\mu$ if $ \int_{}^{}f d\mu_n $ converges to  $ \int_{}^{}f d\mu  $ for every bounded function $f$. An example of a  kernel    verifying ($\ref{Q12}$) can be given by   $Q(x,dy)=e^{-|x-y|}dy$.

\begin{monTheoreme}\label{ntm}
Under Assumptions (\ref{a1}), (\ref{Q1}), (\ref{Qpasut}) and (\ref{Q12}), there exist constants $C,\omega > 0$ and a unique T-periodic Floquet family  $(\gamma_{s} , h_{s}, \lambda_F) \subset  \mathcal{M}_+(\mathbb{R}) \times \mathcal{B}_+(\mathbb{R}) \times \mathbb{R}$ with  $\langle \gamma_{s}, h_{s} \rangle=1$ for all $s \geq 0$ and $   \left\| h_{s_0 }  \right\|_{\infty}  =1$ for a certain $s_0 \geq 0$ , such that for any initial condition $u_s \in \mathcal{M}(\mathbb{R})$ the corresponding solution $u(t,x)$ of equation (\ref{yop}) verifies for all $t \geq s$

$$ \left\| u(t,.)e^{-\lambda_F(t-s)}-\left(\int_{\mathbb{R}}^{}{h_{s}(x) u_s(dx)}\right)\gamma_{t} \right\|_{TV} \leq C \left\|  u_s \right\|_{TV} e^{-\omega (t-s)}.$$
\end{monTheoreme}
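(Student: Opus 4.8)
The plan is to realise the solutions of \eqref{yop} as a positive $T$-periodic semiflow $(M_{s,t})_{t\ge s\ge 0}$, verify Assumption~B for it, and then simply invoke Theorem~\ref{periodicité} followed by Theorem~\ref{thm1.2}. First I would build $(M_{s,t})$ acting on a weighted space $\mathcal M(V)$ (and its dual on $\mathcal B(V)$) by a Duhamel/Dyson series: let $(S_{s,t})$ be the explicit transport--dilation semiflow generated by $-\partial_x+a(t,\cdot)$ (multiplication along characteristics by $\exp\int a$ together with a unit-speed shift), let $\mathcal Q$ be the bounded non-local operator with kernel $Q$, and set $M_{s,t}=\sum_{k\ge 0}M^{(k)}_{s,t}$ with $M^{(0)}=S$ and $M^{(k+1)}_{s,t}=\int_s^t S_{s,r}\,\mathcal Q\,M^{(k)}_{r,t}\,dr$. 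Convergence of the series and the bound $\|M_{s,t}\|\lesssim e^{(\widehat Q+A)(t-s)}$ follow from $\widehat Q<\infty$ (\ref{Qpasut}) and $A=\sup_{\mathbb R}\overline a<\infty$; positivity and $T$-periodicity are read off term by term (using that $a$ is $T$-periodic and $Q$ autonomous), local boundedness of $(s,t)\mapsto\|M_{s,t}V\|_{\mathcal B(V)}$ is immediate, and one checks that $u(t,\cdot)=u_s M_{s,t}$ solves \eqref{yop}. The weight $V$ is taken bounded (so that $\|\cdot\|_{\mathcal M(V)}$ is equivalent to $\|\cdot\|_{TV}$, which is what lets the final estimate be stated in total variation), together with a positive sub-solution $\psi\le V$ comparable to $V$ on a large interval $K=[-R,R]$, with decay at infinity dictated by $\underline a$.

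The bulk of the work is verifying Assumption~B, with $\tau=kT$ for a suitable integer $k$ and, say, $s_0=0$. Condition (B0) and the local bounds $M_{s,t}V\lesssim V$, $M_{s,t}\psi\gtrsim\psi$ on bounded time intervals come from the explicit transport--dilation estimates of the construction. The Foster--Lyapunov inequality (B1) is where $\overline a\to-\infty$ enters: far from $K$ the death rate dominates the total birth rate $\widehat Q$, so $M_{s,s+\tau}V$ contracts strictly and $\alpha<1$ is obtained by taking $R$ large, while $\theta\mathbb 1_K\psi$ absorbs the bounded contribution on $K$; the lower bound (B2) uses $a\ge\underline a$ with the chosen $\psi$. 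The Doeblin-type minorisation (B3) on $K$ uses the absolutely continuous lower bound (\ref{Q1}): a mutation spreads mass with a uniform density on an $\epsilon$-neighbourhood, and finitely many mutations (hence $\tau$ must be large enough) reach every point of $K$, producing $c>0$ and $\nu$. Condition (B4) is the genuinely new point compared with the homogeneous theory of \cite{cloez2020irreducibility}: one must compare $M_{s_0,s_0+n\tau}\psi$ with $M_{s,s+n\tau}\psi$ on $K$ \emph{uniformly in} $n$; as these two semiflows differ by a time shift $s-s_0$, reduced modulo $T$ by periodicity, the transport--dilation part of the ratio is controlled by (\ref{a1}) --- which bounds the variation of $\int a$ along a characteristic under a time shift \emph{uniformly in the length of integration} (an oscillation/cancellation effect, as the example $a=-\sqrt{|x+\sin t|}$ illustrates) --- and the non-local part by (\ref{Q12}). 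Finally, the coupling condition (B5), following \cite{cloez2020irreducibility}, again rests on (\ref{Q1}): from any $x,y\in K$, a single mutation lands a trajectory issued from $x$ inside a fixed neighbourhood of the trajectory issued from $y$ at some random time $u\in[s_0,s_0+\tau]$, with a density bounded below uniformly in $x,y$; the $\sigma_{x,y}$ are (up to normalisation) the laws of these mutation times and $d>0$ is the uniform lower bound.

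Once Assumption~B holds, Theorem~\ref{periodicité} gives that $M_{0,T}$ satisfies Assumption~A, and Theorem~\ref{thm1.2} (its hypotheses on $(M_{s,t})$ having been checked above) provides the unique $T$-periodic Floquet family $(\gamma_s,h_s,\lambda_F)\in\mathcal M_+(V)\times\mathcal B_+(V)\times\mathbb R$ with $\langle\gamma_s,h_s\rangle=\|h_s\|_{\mathcal B(V)}=1$ and the bound \eqref{innequa}. Taking $\mu=u_s$, using $u(t,\cdot)=u_s M_{s,t}$ and the equivalence of $\|\cdot\|_{\mathcal M(V)}$ with $\|\cdot\|_{TV}$ for the chosen bounded $V$ yields the stated inequality. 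It then remains to upgrade regularity: $h_s\in C^1_b$ follows from reading $M_{s,t}h_t=e^{\lambda_F(t-s)}h_s$ as a transport equation with $C^1$-in-space, continuous coefficients and a source built from the narrowly continuous $Q$ (propagating $C^1$ regularity along characteristics, boundedness coming from the normalisation), and $\gamma_t\in L^1$ from $\gamma_t=e^{-\lambda_F T}\gamma_{t-T}M_{t-T,t}$ together with the regularising effect of the non-local gain term (in the Duhamel expansion the transported purely singular part must vanish, by the eigenrelation and a bootstrap on $n$ in $\gamma_0=e^{-n\lambda_F T}\gamma_0 M_{0,nT}$).

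The main obstacle is Step~2, and within it condition (B4): it is exactly the obstruction absent in the homogeneous case, and it is what forces the unusual hypotheses (\ref{a1}) and (\ref{Q12}) --- the crux being that the comparison constant between the shifted and unshifted semiflows stays bounded as the number of periods $n\to\infty$, which for the transport--dilation part is a uniform-in-time bound on an oscillatory integral. A secondary technical burden is choosing $(V,\psi)$ so that (B0), (B1) and (B2) hold simultaneously given the interplay of transport, the gain term $Q$ and the confining landscape $\overline a\to-\infty$; the final regularity and normalisation of $(\gamma_s,h_s)$, though routine, also require some care.
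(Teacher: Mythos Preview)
Your proposal is correct and follows essentially the same route as the paper: build the semiflow via the Duhamel formula, take $V$ bounded (the paper uses $V\equiv 1$), verify Assumption~B with $K=[-R,R]$ and $\tau$ a multiple of $T$, and then apply Theorem~\ref{periodicité} followed by Theorem~\ref{thm1.2}. The only notable differences are of detail rather than strategy: the paper constructs $\psi$ concretely as $\overline M_T\psi_0$ for an explicit bump function $\psi_0$ via a time-homogeneous comparison semigroup with fitness $\underline a$, and proves (B5) by an induction on the number of mutations (a single mutation does not suffice for arbitrary $x,y\in K$); conversely, you outline the $C^1_b$/$L^1$ regularity of $(h_s,\gamma_s)$, which the paper leaves implicit.
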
 The method of proof consists in applying our general result, Theorem \ref{thm1.2}, to the semiflow $(M_{s,t})_{t \geq s \geq 0}$ associated to Equation (\ref{yop}), which is defined through the Duhamel formula \begin{align} \label{2.5}
     M_{s,t}f(x)=f(x+t-s)e^{ \int_{s}^{t}{a(\tau,x+\tau-s)d\tau}} + \int_{s}^{t}{ e^{\int_{s}^{\tau}{a(\tau',x+\tau'-s)d\tau'}} \int_{\mathbb{R}}^{}{M_{\tau,t}f(y)Q(x+ \tau-s,dy)}d\tau}
\end{align} by using the Banach fixed point theorem. This semiflow satisfies \begin{align*}
\partial_tM_{s,t}f=M_{s,t}\mathcal{L}_tf \hspace{0.2cm} \text{and} \hspace{0.2cm} \partial_sM_{s,t}f=-\mathcal{L}_sM_{s,t}f,
\end{align*} where $ \mathcal{L}_s$ is the generator associated to equation $(2.1)$, which reads \begin{align*}
\mathcal{L}_sf(x)=f'(x)+a(s,x)f(x)+\int_{\mathbb{R}}^{}f(y)Q(x,dy).
\end{align*} We will rigorously demonstrate that all is well defined in some suitable spaces.
 
\subsection{The dual equation}
 
 Let us start by setting the dual equation of (\ref{yop}) which is \begin{align} \label{2.6}
    -\partial_s \psi(s,x) - \partial_x \psi(s,x) = \int_{\mathbb{R}}^{}{\psi(s,y)Q(x,dy)+a(s,x) \psi(s,x)}. 
\end{align} We denote $Y = \left\{ (s,t,x) \in \mathbb{R}^3, \hspace{0.2cm}  0 \leq s \leq t \right\} $   and for all $L>0$ we define $ Y^L= \left\{ (s,t,x) \in \mathbb{R}^3, \ \max(0,t-L) \leq s \leq t \right\} $ and  $\mathcal{B}_{\text{loc}}=  \left\{ f: Y \rightarrow \mathbb{R} \hspace{0.1cm}\text{measurable}, \quad f_{|Y^L} \in \mathcal{B}(Y^L) \ \text{for all} \  L>0 \right\}$.  \begin{deff}
We say that $\psi \in  \mathcal{B}_{\mathrm{loc}}(Y)$ is solution of (\ref{2.6}) with final condition $\psi(t,t,\cdot)=f$ when for all $ (s,t,x) \in Y$  \begin{align} \label{2.7}
  \psi(s,t,x)=f(x+t-s)e^{ \int_{s}^{t}{a(\tau,x+\tau-s)d\tau}} + \int_{s}^{t}{ e^{\int_{s}^{\tau}{a(\tau',x+\tau'-s)d\tau'}} \int_{\mathbb{R}}^{}{\psi(\tau,t,y)Q(x+ \tau-s,dy)}d\tau}.  
\end{align}

\end{deff} Let's start by proving the following theorem using Banach's fixed point theorem.

\begin{monTheoreme}
For all $f \in \mathcal{B}(\mathbb{R})$, there  exists  a unique function $\psi \in  \mathcal{B}_{\mathrm{loc}}(Y)$  solution to (\ref{2.6}) with final condition $\psi(t,t,\cdot)=f$. Additionally if $f \in C_b(\mathbb{R})$ then $\psi \in  C(Y) \cap  \mathcal{B}_{\mathrm{loc}}(Y) $.  Moreover, when $f \in C_b^1(\mathbb{R})$ we have \begin{align*}
\partial_tM_{s,t}f=M_{s,t}\mathcal{L}_tf \hspace{0.2cm} \text{and} \hspace{0.2cm} \partial_sM_{s,t}f=-\mathcal{L}_sM_{s,t}f,
\end{align*}  where $M_{s,t}f(x)= \psi(s,t,x)$ for all $t \geq s \geq 0$ and $x \in \mathbb{R}$.
\end{monTheoreme}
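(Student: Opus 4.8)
The plan is to read \eqref{2.7} as a Volterra-type fixed point equation, solve it by contraction, and then propagate first the continuity and then the differentiability of the terminal datum $f$ through the fixed point. Throughout we use that $a\le A$ on $\mathbb{R}^2$, so that $e^{\int_s^\tau a(\tau',x+\tau'-s)\,d\tau'}\le e^{A(\tau-s)}$, and that $\widehat Q<\infty$ by \eqref{Qpasut}.

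\textbf{Existence, uniqueness and the evolution property.} Fix a horizon $T_0>0$, set $Y_{T_0}=Y\cap\{t\le T_0\}$, and equip $\mathcal{B}(Y_{T_0})$ with the equivalent norm $\|\psi\|_\lambda=\sup_{(s,t,x)\in Y_{T_0}}|\psi(s,t,x)|\,e^{-\lambda(t-s)}$. Let $\Phi$ send $\psi$ to the right-hand side of \eqref{2.7}. It maps $\mathcal{B}(Y_{T_0})$ into itself: measurability of $(s,t,x)\mapsto\int\psi(\tau,t,y)\,Q(x+\tau-s,dy)$ follows because $x\mapsto Q(x,\cdot)$ is a measurable kernel and from Fubini, and finiteness from $\widehat Q<\infty$; and a direct estimate using $a\le A$ gives $\|\Phi\psi_1-\Phi\psi_2\|_\lambda\le \lambda^{-1}e^{AT_0}\widehat Q\,\|\psi_1-\psi_2\|_\lambda$, so $\Phi$ is a contraction for $\lambda$ large. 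The unique fixed points $\psi^{T_0}$ are consistent as $T_0$ grows, by uniqueness, and glue to a solution $\psi$ of \eqref{2.7} on $Y$, unique among functions bounded on each bounded-$t$ slab; this is the asserted $\psi\in\mathcal{B}(Y)$. Setting $M_{s,t}f(x):=\psi(s,t,x)$ and applying the same uniqueness on $[s,u]$ with terminal datum $M_{u,t}f$ (after splitting the exponential weight and the $\tau$-integral at $u$) yields the evolution identity $M_{s,t}f=M_{s,u}(M_{u,t}f)$ for $0\le s\le u\le t$.

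\textbf{Propagation of continuity.} Assume $f\in C_b(\mathbb{R})$. The free term $f(x+t-s)\,e^{\int_s^t a(\tau,x+\tau-s)\,d\tau}$ is continuous on $Y_{T_0}$ since $f$ and $a$ are. If $\psi\in C_b(Y_{T_0})$, then for each $\tau$ the map $x\mapsto\int\psi(\tau,t,y)\,Q(x+\tau-s,dy)$ is continuous by the narrow continuity of $x\mapsto Q(x,\cdot)$, and dominated convergence (domination by $\|\psi\|_\infty\widehat Q$) upgrades this to joint continuity in $(s,t,x)$ and allows passing to the limit under the $\tau$-integral. Hence $\Phi$ maps the closed subspace $C_b(Y_{T_0})\subset\mathcal{B}(Y_{T_0})$ into itself, so its unique fixed point lies there; gluing gives $\psi\in C_b(Y)$.

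\textbf{Differentiability and the two identities.} Assume $f\in C_b^1(\mathbb{R})$. First, differentiating \eqref{2.7} in the \emph{final} time $t$ is harmless, no measure being differentiated, and the resulting integral equation for $\partial_t\psi$ has exactly the source term of the equation \eqref{2.7} satisfied by $M_{s,t}(\mathcal{L}_tf)$; by the uniqueness of the first step, $\partial_tM_{s,t}f=M_{s,t}\mathcal{L}_tf$. Second, along each characteristic $\tau\mapsto(\tau,x+\tau-s)$ the identity \eqref{2.7} is the mild form of a linear ODE with continuous coefficients (continuity of the source $\int\psi(\cdot,t,y)\,Q(\cdot,dy)$ coming from Step~2 and the narrow continuity of $Q$), so the characteristic derivative $(\partial_s+\partial_x)\psi$ exists, is continuous, and equals $-a(s,x)\psi(s,t,x)-\int\psi(s,t,y)\,Q(x,dy)$. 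The main obstacle is the existence of $\partial_x\psi$ \emph{separately}: the nonlocal term of \eqref{2.7} contains only the bare measure $Q(x+\tau-s,\cdot)$, which cannot be differentiated in $x$. This is circumvented by the change of variables $z=x+\tau-s$ in the outer integral, after which $Q$ is evaluated only at the integration variable $z$, and the $x$-dependence of that term sits in the integration limits $[x,x+t-s]$, in the exponential weight (now differentiable in $x$, using that $a$ is $C^1$ in space), and in the first slot of $M_{\tau,t}f$; eliminating the latter via the relation just obtained for $(\partial_s+\partial_x)\psi$, one gets a closed linear Volterra equation for the candidate $v:=\partial_x\psi$, solved by the same contraction (locally in $x$, since $\partial_x a$ need not be bounded), and a difference-quotient argument with dominated convergence identifies $v$ with $\partial_x\psi$ and gives its continuity. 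Combining the characteristic identity with the existence of $\partial_x\psi$ yields $\partial_sM_{s,t}f=(\partial_s+\partial_x)\psi-\partial_x\psi=-\mathcal{L}_sM_{s,t}f$; together with the first identity this completes the proof. (The last identity can also be read off from the short-time expansion $M_{s-h,s}g=g+h\,\mathcal{L}_sg+o(h)$ for $g=M_{s,t}f$ together with the evolution property, once $M_{s,t}f$ is known to be differentiable in $x$.)
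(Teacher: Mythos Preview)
Your argument is correct in outline and runs close to, but not identically with, the paper's. For existence and uniqueness you use an exponentially weighted norm $\|\cdot\|_\lambda$ to get a single contraction on each slab $Y_{T_0}$; the paper instead slices time into short intervals of width $\epsilon$ with $\epsilon e^{\epsilon A}\widehat Q<1$ and iterates from slice to slice. Both are standard; yours is a little slicker and avoids the shifted families $\widetilde Y^k_{\epsilon,N}$ the paper needs for matching at slice boundaries. For continuity, your direct check that $\Phi$ preserves $C_b$ via narrow continuity of $Q$ is cleaner than the paper's route. For differentiability the paper redoes the contraction in a local $C^1$ norm on bounded-$x$ sets $Y^k_{\epsilon,N}$, computing $\partial_t\Gamma$, $\partial_s\Gamma$, $\partial_x\Gamma$ explicitly (including the same substitution $z=x+\tau-s$ you use) and reading off both identities from those formulae; you instead obtain the characteristic derivative $(\partial_s+\partial_x)\psi$ directly from the mild form and then isolate $\partial_x\psi$.

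Two places in your write-up are a bit thin and deserve a sentence more. First, your $\partial_t$ step \emph{identifies} $\partial_t\psi$ with $M_{s,t}\mathcal L_tf$ by uniqueness of the Duhamel fixed point, but you have not yet shown that $\partial_t\psi$ exists; the short-time expansion you mention in the final parenthesis is the right fix, but note that $\mathcal L_tf$ is unbounded because $a$ is, so this too must be localised in $x$ exactly as you already do for $\partial_x$. Second, in closing the Volterra equation for $v=\partial_x\psi$, make explicit that the $x$-derivative of $\psi(z-x+s,t,y)$ contributes $-\partial_s\psi(\cdot,t,y)$, which you rewrite as $-\big[(\partial_s+\partial_x)\psi-v\big](\cdot,t,y)$ using the characteristic identity; this is what makes the equation genuinely closed in $v$ and is the point where your route differs most from the paper's explicit $C^1$ contraction.
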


\begin{proof}
Let  $\epsilon>0$ to be chosen later and denote for all $k \in 
[\![0;n]\!]$ \begin{align*}
 Y_{\epsilon}^k= \left\{  (s,t,x) \in \mathbb{R}^3, \hspace{0.2cm} \max(0,t-(k+1)\epsilon) \leq s \leq  t-k\epsilon  \right\}.
\end{align*}  We will construct by induction  a function $\psi_k$ in each set $Y_{\epsilon}^k$. Let $f_0 \in  \mathcal{B}(\mathbb{R})  $, we start by defining the operator $\Gamma_0: \mathcal{B}( Y_{\epsilon}^0)  \rightarrow \mathcal{B}(Y_{\epsilon}^0)  $ by \begin{align*}
    \Gamma_0 g(s,t,x) &=f_0(x+t-s)e^{\int_{s}^{t}a(\tau,x+\tau-s)d\tau}+ \int_{s}^{t}e^{\int_{s}^{\tau}a(\tau',x+\tau'-s)d\tau'} \int_{\mathbb{R}}^{}g(\tau,t,y)Q(x+\tau-s,dy)d\tau .
\end{align*} For $g_1, g_2 \in  \mathcal{B}( Y_{\epsilon}^0)$ we have \begin{align*}
(\Gamma_0 g_1 - \Gamma_0 g_2 )(s,t,x)=\int_{s}^{t}e^{\int_{s}^{\tau}a(\tau',x+\tau'-s)d\tau'} \int_{\mathbb{R}}^{}(g_1(\tau,t,y)-g_2(\tau,t,y))Q(x+\tau-s,dy)d\tau,
\end{align*}  from which we deduce \begin{align*}
     \left\| \Gamma_0 g_1 - \Gamma_0 g_2 \right\|_{\infty} &\leq \widehat{Q} \left\|  g_1 - g_2 \right\|_{\infty} \int_{s}^{t}e^{\int_{s}^{\tau}a(\tau',x+\tau'-s)d\tau'} d\tau \leq e^{\epsilon A}  (t-s)  \widehat{Q} \left\|  g_1 - g_2 \right\|_{\infty} \leq e^{\epsilon A} \epsilon  \widehat{Q} \left\|  g_1 - g_2 \right\|_{\infty}. 
\end{align*} So, for $ \epsilon e^{\epsilon A} < \frac{1}{ \widehat{Q}} $, $\Gamma_0$ is a contraction,  and we deduce from the Banach fixed point theorem the existence of a unique fixed point $\psi_0$ in $\mathcal{B}( Y_{\epsilon}^0)$. \\ Let us take $k \in 
[\![1;n]\!]$, we have $\psi_{k-1} \in \mathcal{B}( Y_{\epsilon}^{k-1})$ and we define the operator: $\Gamma_{k}:\mathcal{B}( Y_{\epsilon}^{k-1})  \rightarrow \mathcal{B}( Y_{\epsilon}^{k-1})  $ by  \begin{align*}
    \Gamma_{k} g(s,t,x) &=f_{k}(t,x+t-k\epsilon-s)e^{\int_{s}^{t-k\epsilon}a(\tau,x+\tau-s)d\tau}+ \int_{s}^{t-k\epsilon}e^{\int_{s}^{\tau}a(\tau',x+\tau'-s)d\tau'} \int_{\mathbb{R}}^{}g(\tau,t,y)Q(x+\tau-s,dy)d\tau 
\end{align*} where $f_k(t,\cdot)=\psi_{k-1}(t-k\epsilon,t,\cdot)$. For $g_1, g_2 \in  \mathcal{B}( Y_{\epsilon}^{k})$, we have \begin{align*}
(\Gamma_{k} g_1 - \Gamma_{k} g_2 )(s,t,x)=\int_{s}^{t-k\epsilon}e^{\int_{s}^{\tau}a(\tau',x+\tau'-s)d\tau'} \int_{\mathbb{R}}^{}(g_1(\tau,t,y)-g_2(\tau,t,y))Q(x+\tau-s,dy)d\tau,
\end{align*}  from which we deduce similarly as in the previous step that \begin{align*}
     \left\| \Gamma_k g_1 -  \Gamma_k  g_2 \right\|_{\infty} &\leq e^{\epsilon A}\epsilon  \widehat{Q} \left\|  g_1 - g_2 \right\|_{\infty} .
\end{align*} So, for $ \epsilon e^{\epsilon A} < \frac{1}{\widehat{Q}} $, $\Gamma_k$ is a contraction,  and we deduce from the Banach fixed point theorem the existence of a unique fixed point $\psi_k$ in $\mathcal{B}( Y_{\epsilon}^{k})$. As we have $Y^L=\cup^{\infty}_{k=1}Y_{\epsilon}^k$,  we define $\psi^L$ by $\psi_k$ in $Y_{\epsilon}^k$, and  we obtain  the existence of a unique fixed point $\psi$ in $\mathcal{B}_{\text{loc}}(Y)$.  If $f \in C_b(\mathbb{R})$, $\Gamma_k$ leaves $C_b(Y_\epsilon^k)$ invariant, and therefore the fixed point belongs to $ C(Y) \cap \mathcal{B}_{\text{loc}}(Y)$.   \\ 

Let $N \in \mathbb{N}$, we note  \begin{align*}
     Y_N&=\left\{  (s,t,x) \in \mathbb{R}^3, 0\leq s \leq t \hspace{0.2cm} x \in [-N,N]  \right\}  \\
     Y^k_{\epsilon,N}&=  \left\{  (s,t,x) \in \mathbb{R}^3, \hspace{0.2cm} \max(0,t-(k+1)\epsilon) \leq s \leq  t-k\epsilon, \hspace{0.2cm} x \in [-N,N]  \right\} .
\end{align*}  If $f \in C_b^1(\mathbb{R})$ we will do the fixed point in all $ \left\{  g \in  C_b^1(Y^k_{\epsilon,N}), \hspace{0.2cm} g(t-k\epsilon,t, \cdot)= \psi_{k-1}(t-k\epsilon,t,\cdot)  \right\}$  with the norm \begin{align*}
\left\| \cdot \right\|_{C^1}= \left\| \cdot \right\|_{\infty}+ \left\| \partial_x \cdot \right\|_{\infty} +\left\| \partial_s \cdot \right\|_{\infty} +\left\| \partial_t \cdot \right\|_{\infty}.
\end{align*} Let us denote for  $g \in C^1(Y) \cap \mathcal{B}_{\text{loc}}(Y)$ such that $g(t,t,\cdot)=f$ for all $t\geq 0$ 
\begin{align}
    I_1(s,t,x) &=f(x+t-s)e^{\int_{s}^{t}a(\tau,x+\tau-s)d\tau} \nonumber \\
    I_2(s,t,x) &= \int_{s}^{t}e^{\int_{s}^{\tau}a(\tau',x+\tau'-s)d\tau'} \int_{\mathbb{R}}^{}g(\tau,t,y)Q(x+\tau-s,dy)d\tau \nonumber \\
    &=\int_{0}^{t-s}e^{\int_{s}^{u+s}a(\tau',x+\tau'-s)d\tau'} \int_{\mathbb{R}}^{}g(u+s,t,y)Q(x+u,dy)du  \label{int1}\\
    &=\int_{s+x}^{t+x}e^{\int_{s}^{u-x}a(\tau',x+\tau'-s)d\tau'} \int_{\mathbb{R}}^{}g(u-x,t,y)Q(u-s,dy)du.  \label{int2}
\end{align} By differentiating $I_1$ we get  \begin{align*}
    \partial_t I_1(s,t,x)&= \left(f'(x+t-s) +  a(t,x+t-s)f(x+t-s)   \right) e^{\int_{s}^{t}a(\tau,x+\tau-s)d\tau} \\
    \partial_s I_1(s,t,x)&= \left(-f'(x+t-s) -  a(s,x) f(x+t-s) - f(x+t-s) \int_{s}^{t} \partial_x a(\tau,x+\tau-s) d\tau  \right) e^{\int_{s}^{t}a(\tau,x+\tau-s)d\tau} \\
    \partial_x I_1(s,t,x)&= \left(f'(x+t-s) +   f(x+t-s)  \int_{s}^{t} \partial_x a(\tau,x+\tau-s) d\tau   \right) e^{\int_{s}^{t}a(\tau,x+\tau-s)d\tau},
\end{align*}  and for $I_2$ using (\ref{int1}) we obtain \begin{align*}
    \partial_t  I_2(s,t,x) &=   e^{\int_{s}^{t}a(\tau,x+\tau-s)d\tau} \int_{\mathbb{R}}^{}f(y)Q(x+t-s,dy)+\int_{s}^{t}e^{\int_{s}^{\tau}a(\tau',x+\tau'-s)d\tau'} \int_{\mathbb{R}}^{}\partial_t g(\tau,t,y)Q(x+\tau-s,dy)d\tau\\
    \partial_s I_2(s,t,x)&= -e^{\int_{s}^{t}a(\tau,x+\tau-s)d\tau} \int_{\mathbb{R}}^{}f(y)Q(x+t-s,dy) \\
     & \qquad  +\int_{0}^{t-s} \left(a(u+s,x+u)-a(s,x) - \int_{s}^{u+s} \partial_x a(\tau',x+\tau'-s) d \tau' \right) e^{\int_{s}^{u+s} a(\tau',x+\tau'-s) d\tau'} \\
    &\hspace{0.3cm} \qquad \int_{\mathbb{R}}^{}g(u+s,t,y)Q(x+u,dy)du + \int_{0}^{t-s}e^{\int_{s}^{u+s}a(\tau',x+\tau'-s)d\tau'} \int_{\mathbb{R}}^{}\partial_s g(u+s,t,y)Q(x+u,dy)du \\
    &= -e^{\int_{s}^{t}a(\tau,x+\tau-s)d\tau} \int_{\mathbb{R}}^{}f(y)Q(x+t-s,dy) \\
    &  \qquad+\int_{s}^{t} \left(a(\tau,x+\tau-s)-a(s,x) - \int_{s}^{\tau} \partial_x a(\tau',x+\tau'-s) d \tau' \right) e^{\int_{s}^{\tau} a(\tau',x+\tau'-s) d\tau'} \\
    & \hspace{0.3cm} \qquad \int_{\mathbb{R}}^{}g(\tau,t,y)Q(x+\tau-s,dy)d\tau + \int_{s}^{t}e^{\int_{s}^{\tau}a(\tau',x+\tau'-s)d\tau'} \int_{\mathbb{R}}^{}\partial_s g(\tau,t,y)Q(x+\tau-s,dy)du. 
\end{align*} Using (\ref{int2}) we obtain 
\begin{align*}
       \partial_x I_2(s,t,x)&= e^{\int_{s}^{t}a(\tau,x+\tau-s)d\tau} \int_{\mathbb{R}}^{}f(y)Q(x+t-s,dy) - \int_{\mathbb{R}}^{} g(s,t,y) Q(x,dy) \\
    &  \qquad -\int_{s+x}^{t+x} \left(a(u-x,u-s) + \int_{s}^{u-x} \partial_x a(\tau',x+\tau'-s) d \tau' \right) e^{\int_{s}^{u-x} a(\tau',x+\tau'-s) d\tau'} \\
    &\hspace{0.3cm} \qquad \int_{\mathbb{R}}^{} g(u-x,t,y)Q(u-s,dy)du - \int_{s+x}^{t+x}e^{\int_{s}^{u-x}a(\tau',x+\tau'-s)d\tau'} \int_{\mathbb{R}}^{}\partial_s g(u-x,t,y)Q(u-s,dy)du \\
    &= e^{\int_{s}^{t}a(\tau,x+\tau-s)d\tau} \int_{\mathbb{R}}^{}f(y)Q(x+t-s,dy) - \int_{\mathbb{R}}^{} g(s,t,y) Q(x,dy) \\
    &  \qquad -\int_{s}^{t} \left(a(\tau,x+\tau-s) + \int_{s}^{\tau} \partial_x a(\tau',x+\tau'-s) d \tau' \right) e^{\int_{s}^{\tau} a(\tau',x+\tau'-s) d\tau'} \\
    & \hspace{0.3cm} \qquad\int_{\mathbb{R}}^{} g(\tau,t,y)Q(x+\tau-s,dy)d\tau - \int_{s}^{t}e^{\int_{s}^{\tau}a(\tau',x+\tau'-s)d\tau'} \int_{\mathbb{R}}^{}\partial_s g(\tau,t,y)Q(x+\tau-s,dy)d\tau \\
    &=e^{\int_{s}^{t}a(\tau,x+\tau-s)d\tau} \int_{\mathbb{R}}^{}f(y)Q(x+t-s,dy) - \int_{\mathbb{R}}^{} \left(  \int_{s}^{t} \partial_s g(s,t,y) ds  - f(y) \right) Q(x,dy) \\
    &  \qquad-\int_{s}^{t} \left(a(\tau,x+\tau-s) + \int_{s}^{\tau} \partial_x a(\tau',x+\tau'-s) d \tau' \right) e^{\int_{s}^{\tau} a(\tau',x+\tau'-s) d\tau'} \\
    & \hspace{0.3cm}\qquad \int_{\mathbb{R}}^{} g(\tau,t,y)Q(x+\tau-s,dy)d\tau - \int_{s}^{t}e^{\int_{s}^{\tau}a(\tau',x+\tau'-s)d\tau'} \int_{\mathbb{R}}^{}\partial_s g(\tau,t,y)Q(x+\tau-s,dy)d\tau. 
\end{align*} So we deduce that for all $g_1, g_2 \in  \left\{  g \in  C_b^1 (Y^0_{\epsilon,N}), \hspace{0.2cm}  g(t,t, \cdot)= f \right\}$ \begin{align*}
    \left\| \partial_t \Gamma_0 g_1 - \partial_t \Gamma_0 g_2 \right\|_{\infty} &\leq  e^{\epsilon A} \epsilon\widehat{Q}  \left\| \partial_t g_1 - \partial_t g_2 \right\|_{\infty}\\
     \left\| \partial_s \Gamma_0 g_1 - \partial_s \Gamma_0 g_2 \right\|_{ \infty } &\leq  3Ae^{\epsilon A} \epsilon\widehat{Q}  \left\|  g_1 -  g_2 \right\|_{\infty}+e^{\epsilon A} \epsilon \widehat{Q}  \left\| \partial_s g_1 - \partial_s g_2 \right\|_{ \infty } \\
     \left\| \partial_x \Gamma_0 g_1 - \partial_x \Gamma_0 g_2 \right\|_{ \infty } &\leq  \epsilon \widehat{Q}\left\|\partial_s  g_1 -  \partial_s g_2 \right\|_{\infty}+Ae^{\epsilon A}\epsilon \widehat{Q} \left\|  g_1 -  g_2 \right\|_{\infty}+ \left(\underset{[0,T] \times [-N,N+T]}{\sup} \partial_x a \right) \epsilon^2 e^{\epsilon A} \widehat{Q} \left\|  g_1 -  g_2 \right\|_{\infty} \\ 
     &\qquad +e^{\epsilon A} \epsilon \widehat{Q}  \left\| \partial_s g_1 - \partial_s g_2 \right\|_{ \infty } \\
     & \leq \widehat{Q} \left(  Ae^{\epsilon A}\epsilon+\left(\underset{[0,T] \times [-N,N+T]}{\sup} \partial_x a \right)\epsilon^2 e^{\epsilon A}\right)\left\|  g_1 -  g_2 \right\|_{\infty}+ \left( \widehat{Q}\epsilon + e^{\epsilon A} \epsilon \widehat{Q} \right) \left\| \partial_s g_1 - \partial_s g_2 \right\|_{ \infty }.
\end{align*} So, we obtain \begin{align*}
     \left\| \Gamma_0 g_1 - \Gamma_0 g_2 \right\|_{C^1} &\leq \widehat{Q} \left(  3 e^{\epsilon A} \epsilon+ 4Ae^{\epsilon A} \epsilon+ \left(\underset{[0,T] \times [-N,N+T]}{\sup} \partial_x a \right)e^2e^{\epsilon A} + \epsilon \right) \left\|  g_1 -  g_2 \right\|_{C^1}.
\end{align*} As $\left(  3 e^{\epsilon A} \epsilon+ 4Ae^{\epsilon A} \epsilon+ \left(\underset{[0,T] \times [-N,N+T]}{\sup} \partial_x a \right)e^2e^{\epsilon A} + \epsilon \right)< \frac{1}{ \widehat{Q}} $,  $\Gamma_0$ is a contraction,  and we deduce from the Banach fixed point theorem the existence of a unique fixed point $\psi_0$ in $C_b^1(Y^0_{\epsilon,N})$. In the same way that we did for continuous functions, we obtain by induction a unique fixed point  $\psi_k$ in $C_b^1(Y^k_{\epsilon,N})$.
As we have $Y_N=\cup^{n}_{k=1}Y^k_{\epsilon,N}$, we define $\psi_N$ by $\psi_N=\psi_k$ in $Y^k_{\epsilon,N}$. We then obtain  the existence of a unique fixed point $\psi_N$ in $C_b^1( \cup^{n}_{k=1}\overset{\circ}{\widehat{Y^k_{\epsilon,N}}})$. \\ To show that the fixed point is differentiable at the boundary of the sets we denote for all  $k \in [\![1;2n-1]\!]$

\begin{align*}
    \widetilde{Y}^0_{\epsilon,N}&= \left\{  (s,t,x) \in \mathbb{R}^3, \hspace{0.2cm} \max(0,t-\epsilon/2) \leq s \leq  t, \hspace{0.2cm} -N \leq x \leq N  \right\} \\
    \widetilde{Y}^k_{\epsilon,N}&= \left\{  (s,t,x) \in \mathbb{R}^3, \hspace{0.2cm} \max(0,t-(2k+3)\epsilon/2) \leq s \leq  t-(2k+1)\epsilon/2, \hspace{0.2cm} -N \leq x \leq N   \right\}, 
\end{align*}  and define the operators $\widetilde\Gamma_k: C_b( \widetilde{Y}^k_{\epsilon,N})  \rightarrow C_b(\widetilde{Y}^k_{\epsilon,N})  $. In the same way we prove that $\widetilde\Gamma_k$  is a contraction for all $k \in [\![0;n]\!]$ , so again from the Banach fixed point theorem we obtain a fixed point $\widetilde{\psi}_N$ in  $C_b^1( \cup_{k=1}^{2n-1} \widetilde{Y}^k_{\epsilon,N})$. By uniqueness of the fixed point we have $\psi_N=\widetilde{\psi}_N$ so we deduce that $\psi_N  \in C_b^1(Y_N)$.  We define $\psi$ by $\psi(x)=\psi_N(x)$ if $x \in [-N,N]$, by uniqueness of the fixed point we clearly have $\psi_{N} \Big|_{(-M,M)} = \psi_M $ for $M \leq N$, so $\psi$ is well defined, and we obtain that $\psi \in C^1_b(Y) $.  \\ \\ To conclude the proof let us show the properties \begin{align*}
\partial_tM_{s,t}f=M_{s,t}\mathcal{L}_tf, \hspace{0.2cm} \partial_sM_{s,t}f=-\mathcal{L}_sM_{s,t}f.
\end{align*}  From the computation of $\partial_t \Gamma_0 $, we have \begin{align*}
    \partial_t \Gamma_0  \psi(s,t,x)= \mathcal{L}_t f(x+t-s)e^{\int_{s}^{t}a(\tau,x+\tau-s)d\tau} +  \int_{s}^{t}e^{\int_{s}^{\tau}a(\tau',x+\tau'-s)d\tau'} \int_{\mathbb{R}}^{}\partial_t \psi(\tau,t,y)Q(x+\tau-s,dy)d\tau.
\end{align*} So, if $\Gamma_0  \psi=\psi$, we obtain \begin{align*}
    \partial_t  \psi(s,t,x)= \mathcal{L}_t f(x+t-s)e^{\int_{s}^{t}a(\tau,x+\tau-s)d\tau} +  \int_{s}^{t}e^{\int_{s}^{\tau}a(\tau',x+\tau'-s)d\tau'} \int_{\mathbb{R}}^{}\partial_t \psi(\tau,t,y)Q(x+\tau-s,dy)d\tau,
\end{align*} so, if $\psi$ is the fixed point of $\Gamma_0 $ with terminal condition $f$, then $\partial_t \psi$ is the fixed point of $ \Gamma_0 $ with terminal condition $\mathcal{L}_tf$. By uniqueness we deduce that $\partial_tM_{s,t}\psi=M_{s,t}\mathcal{L}_t \psi $ and from the computation of $\partial_s \Gamma_0 $ and $\partial_x \Gamma_0  $  we have \begin{align*}
\partial_s \Gamma_0  \psi(s,t,x)&= -\partial_x \Gamma_0  \psi(s,t,x) - a(s,x) f(x+t-s)e^{\int_{s}^{t}a(\tau,x+\tau-s)d\tau} -\int_{\mathbb{R}}^{} \psi(s,t,y) Q(x,dy) \\
& \qquad  -\int_{s}^{t}a(s,x)e^{\int_{s}^{\tau}a(\tau',x+\tau'-s)d\tau'}\int_{\mathbb{R}}^{}\psi(\tau,t,y)Q(x+\tau-s,dy)d\tau \\
&= -\partial_x \Gamma_0  \psi(s,t,x) -\int_{\mathbb{R}}^{} \psi(s,t,y) Q(x,dy) \\
& \qquad- a(s,x) \left( f(x+t-s)e^{\int_{s}^{t}a(\tau,x+\tau-s)d\tau}  -\int_{s}^{t}e^{\int_{s}^{\tau}a(\tau',x+\tau'-s)d\tau'}\int_{\mathbb{R}}^{}\psi(\tau,t,y)Q(x+\tau-s,dy)d\tau \right) \\
&=-\partial_x \Gamma_0  \psi(s,t,x) -a(s,x) \Gamma_0  \psi(s,t,x)-\int_{\mathbb{R}}^{} \psi(s,t,y) Q(x,dy). 
\end{align*} If $\psi$ is the fixed point of $\Gamma_0$  with terminal condition $f$, we then obtain $  \partial_s\psi(s,t,x)= -\mathcal{L}_s \psi $ from which we deduce that $ \partial_s M_{s,t} \psi = -\mathcal{L}_s M_{s,t}\psi.  $

\end{proof} \subsection{Construction of measure solutions} We start by the definition of a  measure solution for our PDE. \begin{deff}\label{defmesuresol}
A family $(\mu_{s,t})$ is called a measure solution to Equation (\ref{yop}) if for all bounded and continuously differentiable function $f $ the mapping $(s,t) \rightarrow \langle \mu_{s,t}, f \rangle   $ is continuous and for all $t \geq s \geq 0 $ \begin{align*}
 \langle \mu_{s,t}, f \rangle= \langle \mu_{s,s} , f \rangle+\int_{s}^{t} \langle \mu_{s,\tau} , \mathcal{L}_\tau f  \rangle d \tau.
\end{align*}
\end{deff} For $\mu \in \mathcal{M}_+(  X )$ and $t \geq s \geq 0$ and a borel set $A$ we define \begin{align*}
\left( \mu M_{s,t} \right)(A)= \int_{\mathbb{R}}^{} M_{s,t} \mathbb{1}_{A} d\mu . 
\end{align*} We will start by proving that $\mu M_{s,t}$ defines a positive measure and verifies the duality relation.

\begin{monTheoreme}
For all $\mu \in \mathcal{M}_+(X) $ and all $t \geq s \geq 0$, $\mu M_{s,t}$ defines a positive measure. Additionally for all bounded measurable function $f$ we have the relation

$$ \langle \mu M_{s,t} ,f  \rangle = \langle \mu , M_{s,t} f \rangle.$$

\end{monTheoreme}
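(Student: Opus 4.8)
The plan is to reduce the statement to two structural facts about the propagator $M_{s,t}$ built in the preceding theorem — it is a positive operator, and it is continuous along bounded increasing sequences of functions — and then to run the usual measure-theoretic extension from indicators to general bounded measurable functions.

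First I would record those two facts. Positivity is read directly off the Duhamel representation $(\ref{2.7})$: running the Picard iteration used to construct the fixed point from the initial guess $0$, an induction shows that if the terminal datum $f$ is nonnegative then every iterate is nonnegative, since the weights $e^{\int a}$ are positive and $Q(x,\cdot)$ is a positive measure; passing to the uniform limit gives $M_{s,t}f\ge 0$, and since $M_{s,t}$ is linear (which is immediate from the uniqueness of the solution to $(\ref{2.7})$) this yields $0\le g\le h\Rightarrow M_{s,t}g\le M_{s,t}h$. For the monotone continuity, take $(f_n)\subset\mathcal{B}(\mathbb{R})$ with $0\le f_n\uparrow f$, $f$ bounded, and write $\psi^{f}$ for the solution of $(\ref{2.7})$ with terminal datum $f$. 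By positivity and linearity the functions $\psi^{f_n}$ increase in $n$ and are dominated by $\psi^{f}$, so their pointwise limit $F$ is measurable and dominated by $\psi^{f}$, hence lies in the space $\mathcal{B}(Y)$ in which the preceding theorem asserts uniqueness. Writing $(\ref{2.7})$ for $f_n$ and letting $n\to\infty$ — the interchange of limit and integral being legitimate by monotone convergence, all integrands being nonnegative and nondecreasing in $n$ — shows that $F$ solves $(\ref{2.7})$ with terminal datum $f$; by uniqueness $F=\psi^{f}$, i.e. $M_{s,t}f_n(x)\uparrow M_{s,t}f(x)$ for every $x$.

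With these in hand, the rest is bookkeeping. The set function $A\mapsto(\mu M_{s,t})(A)=\langle\mu,M_{s,t}\mathbb{1}_A\rangle$ is nonnegative since $M_{s,t}\mathbb{1}_A\ge 0$, and vanishes on $\emptyset$ since $M_{s,t}0=0$. For countable additivity, if $(A_k)$ are pairwise disjoint Borel sets with union $A$ then $\sum_{k\le N}\mathbb{1}_{A_k}\uparrow\mathbb{1}_A$, so the monotone continuity of $M_{s,t}$ applied at each point gives $\sum_{k\le N}M_{s,t}\mathbb{1}_{A_k}\uparrow M_{s,t}\mathbb{1}_A$ pointwise, and monotone convergence for the integral against $\mu$ yields $\sum_k(\mu M_{s,t})(A_k)=(\mu M_{s,t})(A)$; thus $\mu M_{s,t}$ is a positive measure (finite whenever $\mu$ is, since $M_{s,t}\mathbb{1}$ is bounded). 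The duality relation $\langle\mu M_{s,t},f\rangle=\langle\mu,M_{s,t}f\rangle$ holds for $f=\mathbb{1}_A$ by the very definition of $\mu M_{s,t}$, hence for nonnegative simple $f$ by linearity of both sides in $f$; for a general nonnegative bounded measurable $f$ one picks simple $0\le f_n\uparrow f$ and passes to the limit using monotone convergence for the measure $\mu M_{s,t}$ on the left and $M_{s,t}f_n\uparrow M_{s,t}f$ together with monotone convergence against $\mu$ on the right; splitting $f=f^+-f^-$ extends the identity to all bounded measurable $f$.

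The one genuine difficulty is the monotone-continuity step: one must justify the passage to the limit inside the Duhamel integral $(\ref{2.7})$ and then invoke the uniqueness part of the preceding theorem, which forces one to check a priori that the candidate limit $F$ is bounded — this is exactly the role of the domination $\psi^{f_n}\le\psi^{f}$. Once that is settled, every other step is a direct application of the monotone convergence theorem.
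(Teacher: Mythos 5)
Your proposal is correct and follows essentially the same route as the paper: positivity of the operator from the Duhamel formula, countable additivity via the interchange $M_{s,t}f_n\uparrow M_{s,t}f$ obtained by passing to the limit in the fixed-point relation and invoking uniqueness, and then the standard extension of the duality identity from indicators to simple to general bounded measurable functions. Your treatment of the monotone-continuity step (the domination $\psi^{f_n}\le\psi^{f}$ ensuring the candidate limit lies in the uniqueness class) is in fact more carefully justified than the paper's one-line appeal to monotone convergence and uniqueness.
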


\begin{proof}
Clearly, as $M_{s,t}$ is a positive operator we have that $\mu M_{s,t}$ is positive and $(\mu M_{s,t})(\emptyset)=0$. Let $(A_n)_{n \geq 0}$ be a countable sequence of disjoint Borel sets of $ X  $ and define $f_n=\sum_{k=0}^n \mathbb{1}_{A_k}=\mathbb{1}_{\cup_{k=0}^n A_k}$. For any integer $n$ we have  \begin{align*}
    \mu M_{s,t} \left(  \cup_{k=0}^n A_k  \right) &= \int_{\mathbb{R}}^{}M_{s,t} f_n d\mu =\sum_{k=0}^n \int_{\mathbb{R}}^{} M_{s,t} \mathbb{1}_{A_k}d \mu 
    = \sum_{k=0}^n (\mu M_{s,t}) (A_k) .
\end{align*} We apply the monotone convergence theorem to the relation (\ref{2.7}) and we obtain by uniqueness of the fixed point that $ \lim\limits_{n \rightarrow +\infty} \left( M_{s,t} f_n(x) \right)= M_{s,t} \left( \lim\limits_{n \rightarrow +\infty} f_n(x) \right)$ for all $t \geq s \geq 0 $  and  $x \in X $. Moreover, $(f_n)_{n \geq 0}$ is increasing and it's pointwise limit is $f= \mathbb{1}_{\cup_{k=0}^{\infty} A_k }$, so we deduce by monotone convergence \begin{align*}
     \lim\limits_{n \rightarrow +\infty} \int_{\mathbb{R}}^{} M_{s,t} f_n d\mu 
    = \int_{\mathbb{R}}^{} M_{s,t} f d\mu 
    = \mu M_{s,t} \left( \cup_{k=0}^{\infty} A_k \right).
\end{align*} So we conclude that $ \mu M_{s,t} \left( \cup_{k=0}^{\infty} A_k \right)= \sum_{k=0}^{\infty} (\mu M_{s,t}) (A_k)  $ and $\mu M_{s,t}$ satisfies the definition of a positive measure. For a simple function $f$ we have clearly the identity $\langle \mu M_{s,t} ,f \rangle = \langle \mu , M_{s,t} f \rangle$. We deduce by using $\lim\limits_{n \rightarrow +\infty} \left( M_{s,t} f_n(x) \right))= M_{s,t} \left( \lim\limits_{x \rightarrow +\infty} f_n(x) \right)$ that the relation $\langle \mu M_{s,t} ,f \rangle = \langle \mu , M_{s,t} f \rangle$ is also true for any non negative measurable bounded function. Finally, decomposing $f=f_+-f_-$ we obtain that for all function $f$ measurable $ \langle \mu M_{s,t} ,f \rangle = \langle \mu , M_{s,t} f \rangle.$

\end{proof} We can now finish this part by the construction of a measure solution of (\ref{yop}).

\begin{lemma}\label{hilbert}
  If the family $( \mu_{s,t})_{t \geq s \geq 0} \subset \mathcal{M}(\mathbb{R})$  is  solution to Equation (\ref{yop}), in the sense of Definition \ref{defmesuresol}, then  the mapping $(s,t) \rightarrow \mu_{s,t}$ is continuous and for all $\psi \in C^1_b((s,\infty) \times\mathbb{R})$ with compact support in time,
  
  \begin{align*}
         \int_{s}^{\infty}  \int_{\mathbb{R}}^{} \left( \partial_t \psi(t,x) +\mathcal{L}_t \psi(t,x) \right) d\mu_{s,t}(x) dt + \int_{\mathbb{R}}^{} \psi(s,x) d\mu_{s,s}(x) =0 . 
  \end{align*}

\end{lemma}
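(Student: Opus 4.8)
The plan is to upgrade the weak formulation of Definition~\ref{defmesuresol}, which only allows test functions constant in time, to one with time-dependent test functions, by discretizing the time variable and performing a discrete integration by parts in time.

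Fix $s\geq 0$ and $\psi\in C^1_b$ with compact support in time, say contained in an interval $[s,S]$ with $\psi(S,\cdot)\equiv 0$ (enlarge $S$ if needed). First I would establish the a priori ingredients. Feeding $f\equiv 1$ into the identity of Definition~\ref{defmesuresol} and using $a(\tau,\cdot)\leq A$ and $Q(\cdot,\mathbb{R})\leq\widehat{Q}$ gives, by Gronwall, a locally uniform total-variation bound $\sup_{s\leq\tau\leq S}\|\mu_{s,\tau}\|_{TV}<\infty$; the same scheme applied to smooth truncations of the weight $W=1+|\underline{a}|+|\overline{a}|$, which dominates $|a(\tau,\cdot)|+Q(\cdot,\mathbb{R})$ and tends to $+\infty$, yields $\sup_{s\leq\tau\leq S}\langle|\mu_{s,\tau}|,W\rangle<\infty$, hence uniform tightness of $(\mu_{s,\tau})_{s\leq\tau\leq S}$. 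Combined with the continuity of $(s,t)\mapsto\langle\mu_{s,t},f\rangle$ for $f\in C^1_b$ already contained in Definition~\ref{defmesuresol}, this yields continuity of $(s,t)\mapsto\mu_{s,t}$ for the narrow topology (the first assertion of the lemma) and, in particular, makes $\tau\mapsto\langle\mu_{s,\tau},\psi(\tau,\cdot)\rangle$ and $\tau\mapsto\langle\mu_{s,\tau},\mathcal{L}_\tau\psi(\tau,\cdot)\rangle$ continuous on $[s,S]$.

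For the identity, take a partition $s=t_0<\cdots<t_n=S$ of mesh $\delta$. Since $\psi(S,\cdot)\equiv 0$, telescoping gives $-\langle\mu_{s,s},\psi(s,\cdot)\rangle=\sum_{k}\big(\langle\mu_{s,t_{k+1}},\psi(t_{k+1},\cdot)\rangle-\langle\mu_{s,t_k},\psi(t_k,\cdot)\rangle\big)$, and I split the $k$-th increment as
\[
\langle\mu_{s,t_{k+1}}-\mu_{s,t_k},\psi(t_{k+1},\cdot)\rangle+\langle\mu_{s,t_k},\psi(t_{k+1},\cdot)-\psi(t_k,\cdot)\rangle .
\]
Applying Definition~\ref{defmesuresol} with the frozen test function $\psi(t_{k+1},\cdot)\in C^1_b$ turns the first term into $\int_{t_k}^{t_{k+1}}\langle\mu_{s,\tau},\mathcal{L}_\tau\psi(t_{k+1},\cdot)\rangle\,d\tau$, while writing $\psi(t_{k+1},x)-\psi(t_k,x)=\int_{t_k}^{t_{k+1}}\partial_t\psi(\tau,x)\,d\tau$ and applying Fubini turns the second into $\int_{t_k}^{t_{k+1}}\langle\mu_{s,t_k},\partial_t\psi(\tau,\cdot)\rangle\,d\tau$. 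Summing over $k$ and letting $\delta\to 0$, the two resulting Riemann-type sums converge to $\int_s^{S}\langle\mu_{s,\tau},\mathcal{L}_\tau\psi(\tau,\cdot)\rangle\,d\tau$ and $\int_s^{S}\langle\mu_{s,\tau},\partial_t\psi(\tau,\cdot)\rangle\,d\tau$ respectively: the first convergence holds because the error on $[t_k,t_{k+1}]$, namely $\langle\mu_{s,\tau},\mathcal{L}_\tau(\psi(t_{k+1},\cdot)-\psi(\tau,\cdot))\rangle$, is $o(1)$ uniformly in $\tau$ by the moment and tightness bounds together with the uniform continuity of $\psi$ and $\partial_x\psi$ on compact sets; the second follows from the narrow continuity of $\tau\mapsto\mu_{s,\tau}$, the total-variation bound, and the tightness. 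Rearranging the resulting equality (and reducing $\int_s^\infty$ to $\int_s^S$) gives exactly the asserted identity.

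The delicate point throughout is the interplay between the non-compactness of $\mathbb{R}$ in the $x$-variable and the unboundedness from below of $a(\tau,\cdot)$: the pairings $\langle\mu_{s,\tau},\mathcal{L}_\tau g\rangle$ only make sense because the solution decays faster than $|a|$, and every limit above rests on the locally uniform moment and tightness estimates (the total-variation bound needing a little extra care in the signed case); securing those --- equivalently, the narrow continuity of $(s,t)\mapsto\mu_{s,t}$ --- is where the real work lies, after which the telescoping and the passages to the limit are routine. As a variant, one could first prove the identity for separable test functions $\psi(t,x)=g(t)f(x)$, where it is immediate from the scalar identity of Definition~\ref{defmesuresol} and an integration by parts in $t$, and then approximate a general $\psi$ by finite sums of such products (for instance using Fejér means in the time variable); this runs into the same tightness requirement.
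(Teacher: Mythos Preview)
Your argument is correct, but the paper takes a much shorter route. Rather than discretising in time, the paper simply inserts, for each fixed $t$, the function $f=\partial_t\psi(t,\cdot)$ into Definition~\ref{defmesuresol}, obtaining
\[
\langle\mu_{s,t},\partial_t\psi(t,\cdot)\rangle=\langle\mu_{s,s},\partial_t\psi(t,\cdot)\rangle+\int_s^t\langle\mu_{s,\tau},\mathcal{L}_\tau\partial_t\psi(t,\cdot)\rangle\,d\tau,
\]
integrates this identity over $t\in[s,\infty)$, swaps $\int_s^\infty\!\int_s^t$ into $\int_s^\infty\!\int_\tau^\infty$ by Fubini, and then uses the compact time support to evaluate $\int_\tau^\infty\partial_t\psi(t,x)\,dt=-\psi(\tau,x)$ (and the analogous identity inside $\mathcal{L}_\tau$). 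That is the entire proof.

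Your telescoping approach is longer but buys two things. First, you only ever use Definition~\ref{defmesuresol} with the frozen test functions $\psi(t_{k+1},\cdot)\in C^1_b(\mathbb{R})$, whereas the paper's choice $f=\partial_t\psi(t,\cdot)$ is not guaranteed to be $C^1$ in $x$ for $\psi$ merely $C^1_b$ (and indeed the paper writes $\partial_x\partial_t\psi$ without comment). Second, you explicitly flag the moment/tightness estimate needed to make sense of $\langle\mu_{s,\tau},\mathcal{L}_\tau g\rangle$ with $a(\tau,\cdot)$ unbounded below; the paper's Fubini swap tacitly relies on the same integrability but does not discuss it. The paper also does not address the continuity assertion in the lemma at all, while you do. The price you pay is that the moment bound in the signed case (your Gronwall on truncations of $1+|\underline a|+|\overline a|$) remains sketched; you rightly identify it as where the real work lies, but as written it is the least secure step of your proposal.
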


\begin{proof}

Assume that $(\mu_{s,t})_{t \geq s \geq 0}$ satisfies Definition \ref{defmesuresol} and let $\psi \in C^1_b((s,\infty) \times\mathbb{R})$ with compact support in time, we use $f=\partial_t \psi(t, \cdot)$ as a test function in Definition \ref{defmesuresol}

\begin{align*}
    &\int_{s}^{\infty}  \int_{\mathbb{R}}^{} \partial_t \psi(t,x)  d\mu_{s,t}(x) dt\\
    &=  \int_{s}^{\infty}  \int_{\mathbb{R}}^{} \partial_t \psi(t,x)  d\mu_{s,s}(x) dt +  \int_{s}^{\infty}  \int_{s}^{t}  \int_{\mathbb{R}}^{}  \mathcal{L}_\tau(\partial_t \psi) d\mu_{s,\tau}(x)d\tau dt \\
    &=\int_{s}^{\infty} \int_{\mathbb{R}}^{} \partial_t \psi(t,x)  d\mu_{s,s}(x) dt +  \int_{s}^{\infty}  \int_{s}^{t}  \int_{\mathbb{R}}^{} \left( \partial_x \partial_t \psi(t,x)+a(\tau,x)\partial_t\psi(t,x)+ \left(\int_{\mathbb{R}}^{} \partial_t \psi(t,y)Q(x,dy) \right) \right) d\mu_{s,\tau}(x) d\tau dt \\
    &= \int_{\mathbb{R}}^{}  \left(\int_{s}^{\infty} \partial_t \psi(t,x)   dt \right) d\mu_{s,s}(x) +  \int_{\mathbb{s}}^{\infty}  \int_{\mathbb{R}}^{} \left( \int_{\tau}^{\infty}  \left[\partial_t \left( \partial_x \psi(t,x)+a(\tau,x) \psi(t,x) \right)+ \partial_t \left(\int_{\mathbb{R}}^{} \psi(t,y)Q(x,dy) \right)\right] dt \right)  d\mu_{s,\tau}(x) d\tau  \\
    &= -\int_{\mathbb{R}}^{}    \psi(s,x)    d\mu_{s,s}(x) -  \int_{s}^{\infty}  \int_{\mathbb{R}}^{} \left( \partial_x \psi(\tau,x)+a(\tau,x)\psi(\tau,x)+  \left(\int_{\mathbb{R}}^{} \psi(\tau,y)Q(x,dy) \right) \right)  d\mu_{s,\tau}(x) d\tau .
\end{align*}

\end{proof}

\begin{monTheoreme} 
  For any measure $\mu \in \mathcal{M}(\mathbb{R})$ the family $(\mu M_{s,t})_{t \geq s \geq 0}$ is the unique solution to (\ref{yop}),  in the sense of Definition \ref{defmesuresol}, with initial distribution $\mu$.
\end{monTheoreme}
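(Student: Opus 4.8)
The plan is to split the statement into an existence part — that $(\mu M_{s,t})_{t\ge s\ge0}$ is a measure solution with initial datum $\mu$ — and a uniqueness part, and to reduce both to the two ingredients already established: the duality relation $\langle\mu M_{s,t},f\rangle=\langle\mu,M_{s,t}f\rangle$ and the differential identities $\partial_tM_{s,t}f=M_{s,t}\mathcal L_tf$, $\partial_sM_{s,t}f=-\mathcal L_sM_{s,t}f$ valid for $f\in C^1_b(\mathbb R)$. That $\mu M_{s,t}$ actually lies in $\mathcal M(\mathbb R)$ is already known, since $\mu M_{s,t}(\mathbb R)=\langle\mu,M_{s,t}\mathbb 1\rangle$ and $M_{s,t}\mathbb 1$ is bounded on compact time intervals by a Gronwall bound coming from $a\le A$ and $\widehat Q<\infty$; so the real task is verifying the defining identity of Definition~\ref{defmesuresol}.

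For existence, I would fix $f\in C^1_b(\mathbb R)$ and $t\ge s\ge0$, integrate $\partial_\tau M_{s,\tau}f=M_{s,\tau}\mathcal L_\tau f$ over $\tau\in[s,t]$ to get $M_{s,t}f=f+\int_s^tM_{s,\tau}\mathcal L_\tau f\,d\tau$, then pair with $\mu$ and apply the duality relation on both sides:
\[\langle\mu M_{s,t},f\rangle=\langle\mu,f\rangle+\Big\langle\mu,\int_s^tM_{s,\tau}\mathcal L_\tau f\,d\tau\Big\rangle=\langle\mu M_{s,s},f\rangle+\int_s^t\langle\mu M_{s,\tau},\mathcal L_\tau f\rangle\,d\tau.\]
The only point to check is the interchange of $\langle\mu,\cdot\rangle$ with $\int_s^t$, which follows from Fubini once $(\tau,x)\mapsto(M_{s,\tau}\mathcal L_\tau f)(x)$ is shown to be dominated, on each compact time interval, by a $\mu$-integrable function. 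This is exactly the identity of Definition~\ref{defmesuresol}, with $\mu M_{s,s}=\mu$ since $M_{s,s}=\mathrm{Id}$; continuity of $(s,t)\mapsto\langle\mu M_{s,t},f\rangle$ comes from the same integral representation together with local boundedness and continuity of $\tau\mapsto M_{s,\tau}\mathcal L_\tau f$.

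For uniqueness, let $(\mu_{s,t})$ be another measure solution with $\mu_{s,s}=\mu$. Fixing a terminal time $t$ and $g\in C^1_b(\mathbb R)$, I would study $F(\tau):=\langle\mu_{s,\tau},M_{\tau,t}g\rangle$ on $[s,t]$ and show it is constant: formally $F'(\tau)=\langle\mu_{s,\tau},\mathcal L_\tau M_{\tau,t}g\rangle+\langle\mu_{s,\tau},\partial_\tau M_{\tau,t}g\rangle$, and the two terms cancel because $\partial_\tau M_{\tau,t}g=-\mathcal L_\tau M_{\tau,t}g$. Rigorously, for $s\le\tau_1\le\tau_2\le t$ I would write $F(\tau_2)-F(\tau_1)=\langle\mu_{s,\tau_2}-\mu_{s,\tau_1},M_{\tau_2,t}g\rangle+\langle\mu_{s,\tau_1},M_{\tau_2,t}g-M_{\tau_1,t}g\rangle$, insert the identity of Definition~\ref{defmesuresol} (with the \emph{fixed} test function $M_{\tau_2,t}g$) into the first term and $M_{\tau_2,t}g-M_{\tau_1,t}g=-\int_{\tau_1}^{\tau_2}\mathcal L_rM_{r,t}g\,dr$ into the second, divide by $\tau_2-\tau_1$ and let $\tau_2\downarrow\tau_1$, using continuity of $r\mapsto\mu_{s,r}$ and of $r\mapsto M_{r,t}g$ to identify the limit as $0$. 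Then $F(s)=F(t)$ reads $\langle\mu,M_{s,t}g\rangle=\langle\mu_{s,t},g\rangle$, i.e. $\langle\mu M_{s,t},g\rangle=\langle\mu_{s,t},g\rangle$, and since $C^1_b(\mathbb R)$ is separating on $\mathcal M(\mathbb R)$ we conclude $\mu_{s,t}=\mu M_{s,t}$ for all $t\ge s$.

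The hard part will be the integrability bookkeeping forced by the fact that $a$ is only bounded above: $\mathcal L_\tau f$ need not be bounded for $f\in C^1_b$, so I expect to first establish all the identities for test functions with compact support (or in a weighted class on which $|a(\tau,\cdot)|$ is controlled), verify the Fubini domination and the convergence of the difference quotients there, and only then pass to general $C^1_b$ test functions by truncation, using the Gronwall bound on $M_{s,t}\mathbb 1$ to control the masses of $\mu M_{s,t}$ and of $\mu_{s,t}$. The second delicate point is that Definition~\ref{defmesuresol} supplies the integral identity only for a fixed test function, so the cancellation in the uniqueness step must be carried out through a telescoping/Riemann-sum argument over a partition of $[s,t]$ with vanishing mesh, together with the locally uniform continuity of $r\mapsto M_{r,t}g$, rather than by a naive differentiation of $F$; everything else is the routine duality computation sketched above.
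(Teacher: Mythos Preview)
Your existence argument is essentially identical to the paper's: integrate $\partial_\tau M_{s,\tau}f=M_{s,\tau}\mathcal L_\tau f$, pair with $\mu$, and use Fubini together with the duality relation.

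For uniqueness you take a genuinely different route. The paper does \emph{not} propagate $F(\tau)=\langle\mu_{s,\tau},M_{\tau,t}g\rangle$; instead it passes through Lemma~\ref{hilbert} (the space--time weak formulation) and then, given any $\phi\in C^1_c((s,\infty)\times\mathbb R)$, solves the inhomogeneous backward equation $\partial_t\psi+\mathcal L_t\psi=\phi$ with vanishing terminal data by the same fixed-point scheme as Theorem~7. Plugging this $\psi$ into the weak formulation with $\mu_{s,s}=0$ yields $\int_s^\infty\langle\mu_{s,t},\phi(t,\cdot)\rangle\,dt=0$ for every such $\phi$, hence $\mu_{s,t}=0$. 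The advantage of the paper's route is that it never uses $\mathcal L_r M_{r,t}g$ as a test function, so the unboundedness of $a$ causes no trouble: the solvability of the backward equation is the single analytic input. Your propagation/telescoping argument is the more classical one and is perfectly sound in principle, but, as you correctly flag, the cancellation step needs $M_{r,t}g$ to be an admissible test function and the differences to be controlled uniformly in $r$; since $\mathcal L_r f$ involves $a(r,\cdot)f$ with $a$ unbounded below, this is exactly where you will have to work through the truncation and the uniform mass bound on $\mu_{s,r}$ that you mention. Either approach closes the argument; the paper simply trades that bookkeeping for one more fixed-point construction.
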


\begin{proof}

We start by checking that for all $s \in \mathbb{R}$, $t \in (s,\infty)  \longmapsto   \langle \mu M_{s,t} , f \rangle$ is continuous for all bounded and continuously differentiable function $f$. Due to the linearity, it's sufficient to check that $\lim\limits_{t \rightarrow s} \langle \mu,  M_{s,t}f \rangle=\langle \mu, f \rangle $. We write

\begin{align*}
      \left|  \langle \mu,  M_{s,t}f \rangle -  \langle  \mu, f \rangle \right| & \leq   \left|  \langle \mu, f \rangle - \int_{\mathbb{R}}^{} f(x+t-s)e^{ \int_{s}^{t}a(\tau,x+\tau-s)d\tau}d \mu \right| +  \\ 
      & \qquad \left| \langle \mu , \int_{s}^{t}{ e^{\int_{s}^{\tau}{a(\tau',x+\tau'-s)d\tau'}} \int_{\mathbb{R}}^{}{M_{\tau,t}f(y)Q(x+ \tau-s,dy)}d\tau} \rangle  \right| \\
      & \leq  \left|  \langle \mu, f \rangle - \int_{\mathbb{R}}^{} f(x+t-s)e^{ \int_{s}^{t}a(\tau,x+\tau-s)d\tau}d \mu \right| + \left(\underset{\tau \in (s,t)}{\sup} M_{\tau,t}f \right)\widehat{Q} \| \mu  \|_{TV}  e^{A(t-s}(t-s).
\end{align*} The first part goes to $0$ by dominated convergence and also the second term because the semiflow is bounded for any function $f$ bounded , which shows that  $t \in (s,\infty) \rightarrow  \langle \mu M_{s,t} , f \rangle$ is continuous. Let us take $f \in C^1_b(\mathbb{R}) $, from $\partial_t M_{s,t}f= M_{s,t} \mathcal{L}_t f$, we deduce by integration that for all $x \in \mathbb{R}$, we have that

$$M_{s,t} f(x) =f(x)+ \int_{s}^{t} M_{s,t}\mathcal{L}_tf(x) dt.$$ We deduce that, by Fubini's theorem,

 $$\langle \mu ,M_{s,t} f \rangle =\langle \mu ,f \rangle + \int_{s}^{t} \langle \mu ,M_{s,t}(\mathcal{L}_tf)\rangle dt. $$ So, we deduce from the identity $\langle \mu M_{s,t} ,f \rangle = \langle \mu , M_{s,t} f \rangle$ that $(\mu M_{s,t})$ satisfies the existence part in Definition $8$. \\  Let us now check the uniqueness. Let $s \in \mathbb{R} $ and $\mu_{s,t}^1, \mu_{s,t}^2$ be two solutions to Equation~\eqref{yop} with $\mu_{s,s}^1=\mu_{s,s}^2=\mu$. Let us set $ \mu_{s,t}=\mu_{s,t}^1-\mu_{s,t}^2 $.
 In order to prove the uniqueness we need to check that $\mu_{s,t}=0$ for all $t \geq s$.
 From Lemma \ref{hilbert} we have
 $$   \int_{s}^{+\infty} \! \int_{\mathbb{R}}^{} \left(\partial_t \psi(t,x) +\mathcal{L}_t \psi(t,x) \right) d\mu_{s,t}(x) dt=0, $$ for all $\psi \in C^1_b((s,\infty) \times\mathbb{R})$ with compact support in time.
 Let us take $\phi \in  C^1_c((s,\infty) \times\mathbb{R})$ and let $T>s$ such that $\text{supp}  (\phi) \subset [s,T) \times \mathbb{R} $.
 Using the same method as in Theorem $7$, we can prove the existence of a solution $\psi \in C^1_b([s,T) \times \mathbb{R})$ such that for all $(t,x) \in [s,T) \times \mathbb{R}$ $$ \partial_t \psi(t,x) +\mathcal{L}_t \psi(t,x) =\phi(t,x), $$ with terminal condition $ \psi(T,x)=0$.
 Since $\phi \in C^1_c([s,T) \times \mathbb{R}) $, we easily check that the extension of $\psi(t,x)$ by~$0$ for $t>T$ belongs to $C^1_b((s,\infty) \times\mathbb{R})$, is compactly supported in time and still satisfies  $ \partial_t \psi(t,x) +\mathcal{L}_t \psi(t,x) =\phi(t,x)$.
 So we obtain
 $$  \int_{s}^{+\infty} \! \int_{\mathbb{R}}^{} \phi(t,x) \,d\mu_{s,t}(x)dt=0 $$
 for all $\phi \in C^1_c((s,\infty) \times\mathbb{R})$.
 This ensures that $\mu_{s,t}=0$, or in other words $\mu^1_{s,t}=\mu^2_{s,t}$,  for all $t \geq s$. 
\end{proof}

\subsection{ Proof of Theorem \ref{ntm}} Before proving the main
theorem of this section, Theorem \ref{ntm}, we start    with a useful strong positivity result about  $M_{s,t}$.  \begin{lemma} \label{lemposiv}
 Let $t>s>0$ and $x_1,x_2,y_1,y_2 \in \mathbb{R}$ with $x_1<x_2$ and $y_1<y_2$. Then there exists $\eta>0$ such that  \begin{align*}
M_{s,s+t }\mathbb{1}_{[x_1,x_2]} \geq \eta \mathbb{1}_{[y_1,y_2]}.
\end{align*} 
\end{lemma} \begin{proof} We use the Duhamel formula (\ref{2.5}) with $f=\mathbb{1}_{[x_1,x_2]}$, and for all $ s,t \geq 0$ \begin{align*}
M_{s,s+t} \mathbb{1}_{[x_1,x_2]}(x)  & \geq  \int_{s}^{t+s} e^{\int_{s}^{\tau} a(\tau',x+\tau'-s)d\tau'  }\int_{\mathbb{R}}^{}  M_{\tau,s+t}\mathbb{1}_{[x_1,x_2]}(y) Q(x+\tau-s,dy) d\tau \\
& \geq  \int_{s}^{t+s} \left(e^{\int_{s}^{\tau} a(\tau',x+\tau'-s)d\tau'  } \right)\left(e^{\int_{\tau}^{s+t} a(\tau',x+\tau'-\tau)d\tau'  } \right) \int_{\mathbb{R}}^{}  \mathbb{1}_{[x_1,x_2]}(y+t+s-\tau) Q(x+\tau-s,dy) d\tau \\
& \geq  \int_{s}^{t+s} e^{\int_{s}^{\tau} \underline{a}(x+\tau'-s)d\tau'  } e^{\int_{\tau}^{s+t} \underline{a}(x+\tau'-\tau)d\tau'  }  \int_{\mathbb{R}}^{}  \mathbb{1}_{[x_1,x_2]}(y+t+s-\tau) Q(x+\tau-s,dy) d\tau \\
& \geq  e^{t \inf_{(x,x+t)} \underline{a}} \int_{s}^{t+s} \int_{\mathbb{R}}^{} \mathbb{1}_{[x_1,x_2]}(y+t+s-\tau) Q(x+\tau-s,dy) d\tau  \\
& \geq \kappa_0 e^{ t \inf_{(x,x+t)} \underline{a}} \int_{s}^{t+s} \int_{\mathbb{R}}^{}  \mathbb{1}_{[x_1,x_2]}(y+t+s-\tau)  \mathbb{1}_{[x+\tau-s-\epsilon,x+\tau-s+\epsilon]}(y) dy d\tau  \\
& \geq \kappa_0 e^{ t \inf_{(x,x+t)} \underline{a}}\int_{s}^{s+t}  \int_{x+\tau-s-\epsilon}^{x+\tau-s+\epsilon}  \mathbb{1}_{[x_1,x_2]}(y+t+s-\tau)   dy d\tau \\
& \geq \kappa_0 e^{ t \inf_{(x,x+t)} \underline{a}}\int_{s}^{s+t}  \int_{x+t-\epsilon}^{x+t+\epsilon}  \mathbb{1}_{[x_1,x_2]}(u) du d\tau \\
& \geq \kappa_0 t e^{ t \inf_{(x,x+t)} \underline{a}}  \int_{x+t-\epsilon}^{x+t+\epsilon} \mathbb{1}_{[x_1,x_2]}(u) du.
\end{align*} So we have   $M_{s,s+t} \mathbb{1}_{[x_1,x_2]}(x) \geq \text{const.} >0$ for all $x\in[x_1-t-\epsilon/2,x_2-t+\epsilon/2]$. Let $n \in \mathbb{N}$, $\tau >0$ and $x\in[x_1-\tau/n-\epsilon/2,x_2-\tau/n+\epsilon/2]$ ,  considering $t=\tau/n$, we get that $M_{s,s+t} \mathbb{1}_{[x_1,x_2]}(x) \geq c_0$ for some $c_0>0$. In the same way we have $ \ M_{s+(k-1)t,s+k t} \mathbb{1}_{[x_1,x_2]}(x) \geq c_0 >0.$ Let us show by induction that \begin{align*}
  M_{s,s+n t} \mathbb{1}_{[x_1,x_2]}(x) \geq c_0^n \mathbb{1}_{[x_1-\tau-n\epsilon/2,x_2-\tau+n\epsilon/2]},
\end{align*} and we will conclude by taking $n$ large enough. We already show it for $n=1$. For $n-1 \rightarrow n$,  \begin{align*}
    M_{s,s+nt} \mathbb{1}_{[x_1,x_2]}(x) &=   M_{s,s+(n-1)t}  M_{s+(n-1)t, s+nt } \mathbb{1}_{[x_1,x_2]}(x) \\
    & \geq c_0^{n-1}  M_{s+(n-1)t, s+nt }   \mathbb{1}_{[x_1-\tau-(n-1)\epsilon/2,x_2-\tau+(n-1)\epsilon/2]} \\
    & \geq c_0^n \mathbb{1}_{[x_1-\tau-n\epsilon/2,x_2-\tau+n\epsilon/2]}.
\end{align*}

\end{proof} Now, we have all the tools to demonstrate the main theorem, Theorem~\ref{ntm}.   We will verify assumptions B with the function $V=1$ and the set $K=(-R,R)$ with $R>0$ large enough to be chosen in the proof.  \begin{proof}[Proof of Theorem~\ref{ntm}]

\textbf{Assumption (B3)} For any $t_0,s  \geq 0$ and any $x \in \mathbb{R}$, we have 
\begin{align*}
 M_{s+t_0/2,s+t_0} f(x)  & \geq  \int_{s+t_0/2}^{s+t_0} e^{\int_{s+t_0/2}^{\tau} a(\tau',x+\tau'-s-t_0/2)d\tau'  }\int_{\mathbb{R}}^{}  M_{\tau,s+t_0}f(y) Q(x+\tau-s,dy) d\tau \\
& \hspace{-20mm} \geq  \int_{s+t_0/2}^{s+t_0} e^{\int_{s+t_0/2}^{\tau} a(\tau',x+\tau'-s-t_0/2)d\tau'  }  \int_{\mathbb{R}} e^{\int_{\tau}^{s+t_0} a(\tau',x+\tau'-\tau)d\tau'  } f(y+s+t_0-\tau) Q(x+\tau-s-t_0/2,dy) d\tau \\
& \hspace{-10mm} \geq  \int_{s+t_0/2}^{s+t_0} e^{\int_{s+t_0/2}^{\tau} \underline{a}(x+\tau'-s-t_0/2)d\tau'  } e^{\int_{\tau}^{s+t_0} \underline{a}(x+\tau'-\tau)d\tau'  }  \int_{\mathbb{R}}^{} f(y+s+t_0-\tau) Q(x+\tau-s-t_0/2,dy) d\tau \\
& \geq  e^{\frac{t_0}{2} \inf_{(x,x+t_0)}  \underline{a}} \int_{s+t_0/2}^{s+t_0} \int_{\mathbb{R}}^{}  f(y+s+t_0-\tau) Q(x+\tau-s-t_0/2,dy) d\tau  \\
& \geq \kappa_0 e^{\frac{t_0}{2} \inf_{(x,x+t_0)}  \underline{a}} \int_{s+t_0/2}^{s+t_0} \int_{\mathbb{R}}^{}  f(y+s+t_0-\tau)  \mathbb{1}_{(x+\tau-s-t_0/2-\epsilon,x+\tau-s-t_0/2+\epsilon)}(y) dy d\tau  \\
& \geq \kappa_0 e^{\frac{t_0}{2} \inf_{(x,x+t_0)}  \underline{a}}\int_{s+t_0/2}^{s+t_0}  \int_{x+\tau-s-t_0/2-\epsilon}^{x+\tau-s-t_0/2+\epsilon} f(y+s+t_0-\tau)  dy d\tau \\
& \geq \kappa_0 e^{\frac{t_0}{2} \inf_{(x,x+t_0)}  \underline{a}}\int_{s+t_0/2}^{s+t_0}  \int_{x+t_0/2-\epsilon}^{x+t_0/2+\epsilon} f(u) du d\tau \\
& \geq \frac{\kappa_0 t_0}{2} e^{\frac{t_0}{2} \inf_{(x,x+t_0)}  \underline{a}}\int_{x+t_0/2-\epsilon}^{x+t_0/2+\epsilon} f(u) du.
\end{align*} 
We define $\nu=\epsilon^{-1}\mathbb{1}_{\left(\frac{t_0-\epsilon}{2},\frac{t_0+\epsilon}{2}\right)}\lambda$, where $\lambda$ is the Lebesgue measure. From this, we deduce that for all $t_0,s \geq 0$, there exist $c_0>0$ such that, $$M_{s+t_0/2,s+t_0}f(x) \geq c_0 \langle \nu, f \rangle \mathbb{1}_{(-\epsilon/2, \epsilon/2)}.$$
Applying $M_{s,s+t_0/2}$ to this inequality and using Lemma 12, we infer that for all $R>0$ there is $c>0$ such that $$M_{s,s+t_0}f \geq c\langle \nu, f \rangle \mathbb{1}_{(-R,R)} $$ for all $f\geq0$.  \\

\textbf{Assumption (B5)}  We will build by induction two families $(\sigma_{x,y}^{t-s,n})$ et $(c_{x,y}^{t-s,n})$ indexed by $n\in \mathbb{N}$, $t\geq s$, $x,y \in \mathbb{R}$ such that $\sigma_{x,y}^{t-s,n}$ is a probability measure on $[0,t]$ which has a positive Lebesgue density $\mathfrak{s}_{x,y}^{t-s,n}$, $c_{x,y}^{t-s,n}$ is a positive constant and $(x,y,u) \mapsto c_{x,y}^{t-s,n}$ is continuous and for all $f \geq 0$ \begin{align} \label{recuu}
    M_{s,t}f(x) \geq c_{x,y}^{t-s,n} \int_{s}^{t}{M_{u,t}f(y)}\sigma_{x,y}^{t,n}(du). 
\end{align} \underline{For $n=0$:} For
$y=x+t-s $, Duhamel formula (\ref{2.5}) ensures that for any $f\geq 0$ \begin{align*}
 M_{s,t} f(x) \geq f(y)e^{\int_{s}^{t}a(\tau,x+\tau-s)d\tau}. 
\end{align*} It gives $(\ref{recuu})$ with $\sigma_{x,y}^{t-s,0}=\delta_{t}$ and $c_{x,y}^{t-s,0}=e^{\int_{s}^{t}a(\tau,x+\tau-s)d\tau}$. \\ \\ \underline{For $n \rightarrow n+1$:}  The induction hypothesis ensures that \begin{align*}
M_{\tau,t}f(z) \geq c_{z,y}^{t-\tau,n} \int_{\tau}^{t}M_{u,t}f(y) \mathfrak{s}_{z,y}^{t-\tau,n}(u)du . 
\end{align*} So by the Duhamel formula (\ref{2.5}), we obtain \begin{align*}
    M_{s,t}f(x) &\geq \kappa_0 \int_{s}^{t}{ e^{\int_{s}^{\tau}{a(\tau',x+\tau'-s)d\tau'}}  \int_{x+\tau-s-\epsilon}^{x+\tau-s+\epsilon}  c_{z,y}^{t-\tau,n} \int_{\tau}^{t}M_{u,t}f(y)  \mathfrak{s}_{z,y}^{t-\tau,n}(u)du    dz    d\tau} \\
    &=\kappa_0 \int_{s}^{t}{ e^{\int_{s}^{\tau}{a(\tau',x+\tau'-s)d\tau'}}  \int_{\tau}^{t}  \int_{x+\tau-s-\epsilon}^{x+\tau-s+\epsilon}   c_{z,y}^{t-\tau,n} M_{u,t}f(y)  \mathfrak{s}_{z,y}^{t-\tau,n}(u)dz    du    d\tau} \\
    &=\kappa_0 \int_{s}^{t}{   \int_{s}^{u}e^{\int_{s}^{\tau}{a(\tau',x+\tau'-s)d\tau'}}  \int_{x+\tau-s-\epsilon}^{x+\tau-s+\epsilon}   c_{z,y}^{t-\tau,n} M_{u,t}f(y)  \mathfrak{s}_{z,y}^{t-\tau,n}(u)dz    d\tau   du} \\
    &= \kappa_0 \int_{s}^{t}{   M_{u,t}f(y) \int_{s}^{u}e^{\int_{s}^{\tau}{a(\tau',x+\tau'-s)d\tau'}}  \int_{x+\tau-s-\epsilon}^{x+\tau-s+\epsilon}   c_{z,y}^{t-\tau,n}  \mathfrak{s}_{z,y}^{t-\tau,n}(u)dz    d\tau   du}, 
\end{align*} which gives (\ref{recuu}) with \begin{align*}
c_{x,y}^{t-s,n+1}=\kappa_0 \int_{s}^{t}{  \int_{s}^{u}e^{\int_{s}^{\tau}{a(\tau',x+\tau'-s)d\tau'}} \int_{x+\tau-s-\epsilon}^{x+\tau-s+\epsilon}   c_{z,y}^{t-\tau,n}  \mathfrak{s}_{z,y}^{t-\tau,n}(u)dz    d\tau   du}, 
\end{align*} and $\sigma_{x,y}^{t-s,n+1}(ds)=\mathfrak{s}_{x,y}^{t-s,n+1}(s)ds$, with \begin{align*}
\mathfrak{s}_{x,y}^{t-s,n+1}=\frac{\kappa_0}{c_{x,y}^{t,n+1}}\int_{s}^{u}e^{\int_{s}^{\tau}{a(\tau',x+\tau'-s)d\tau'}}  \int_{x+\tau-s-\epsilon}^{x+\tau-s+\epsilon}   c_{z,y}^{t-\tau,n}  \mathfrak{s}_{z,y}^{t-\tau,n}(u)dz    d\tau   du. 
\end{align*} \\ \textbf{Assumption (B2)} Let us take $t \geq s \geq 0$,  and consider $x_0\geq\epsilon$ to be chosen later (in the proof of Assumption~(B1), and consider the function \begin{align*}
 \psi_0(x)=\left(1-(\frac{x}{x_0})^2 \right)^2\mathbb{1}_{[-x_0,x_0]}(x).
\end{align*}  Let us consider the generator $ \overline{\mathcal{L}}$ defined by  $\overline{\mathcal{L}}f(x)=f'(x)+\underline{a}(x)f(x)+\int_{\mathbb{R}}^{}f(y)Q(x,dy) $ for all $f \in C^1_b(\mathbb{R}) $, and  $ (\overline{M}_{s,t})_{t \geq s \geq 0 }  $  the associated  semiflow. We note that, since the operator $\mathcal L$ does not depend on $t$, we can also associate a semigroup $\left(\overline{M}_t\right)_{t\geq 0}$ to this generator which verifies $ \overline{M}_{s,t}=\overline{M}_{t-s} $ for all $t \geq s$.  We clearly have for all $f \geq 0$ and for all $t\geq 0$ \begin{align*}
 \mathcal{L}_tf(x) \geq  \overline{\mathcal{L}}f(x) \iff M_{s,t}f(x) \geq \overline{M}_{s,t}f(x) \iff M_{s,t}f(x) \geq \overline{M}_{t-s}f(x),
\end{align*}  where we used the relation  $ M_{s,t}\mathcal{L}_tf=\partial_tM_{s,t}f $. \begin{align*}
   \overline{\mathcal{L}} \psi_0(x) &= -4 \frac{x}{x_0^2}(1-(\frac{x}{x_0})^2)\mathbb{1}_{(-x_0,x_0)}(x)+a(t,x)\psi_0(x)+\int_{-x_0}^{x_0} \left[  1- \left( \frac{y}{x_0} \right)^2 \right]^2   Q(x,dy) \\
    & \geq - \frac{4}{x_0}(1-(\frac{x}{x_0})^2)\mathbb{1}_{(-x_0,x_0)}(x)+  \left(\underset{(-x_0,x_0)}{\inf } \underline{a} \right)\psi_0(x)+ \kappa_0 x_0\int_{-1}^{1}(1-y^2)^2 \mathbb{1}_{(x-\epsilon,x+\epsilon)}(x_0 y)  dy  \\
    & \geq -\frac{4}{x_0}(1-(\frac{x}{x_0})^2)\mathbb{1}_{(-x_0,x_0)}(x)+   \left(\underset{(-x_0,x_0)}{\inf } \underline{a} \right)\psi_0(x)+ \kappa_0 \mathbb{1}_{(-x_0,x_0)} x_0\int_{1-\frac{\epsilon}{x_0}}^{1}(1-y)^2(1+y)^2dy  \\
    & \geq - \frac{4}{x_0}(1-(\frac{x}{x_0})^2)\mathbb{1}_{(-x_0,x_0)}(x)+   \left(\underset{(-x_0,x_0)}{\inf } \underline{a} \right)\psi_0(x)+ \kappa_0 \mathbb{1}_{(-x_0,x_0)} x_0 \left( \frac{\epsilon^3}{x_0^3}\left( \frac{1}{5} \left(\frac{\epsilon}{x_0} \right)^2 - \frac{\epsilon}{x_0} - \frac{4}{5} \right) \right) \\
    & \geq - \frac{4}{x_0}(1-(\frac{x}{x_0})^2)\mathbb{1}_{(-x_0,x_0)}(x)+  \left(\underset{(-x_0,x_0)}{\inf } \underline{a} \right)\psi_0(x)+ \frac{8\kappa_0 \epsilon^3}{15 x_0^2} \mathbb{1}_{(-x_0,x_0)}. 
\end{align*} If $2 \kappa_0 \epsilon^3 \geq 15 x_0$, then we have \begin{align*}
\overline{\mathcal{L}}\psi_0(x)  \geq  \frac{4}{x_0}  \left( \frac{x}{x_0} \right)^2 +\left(\underset{(-x_0,x_0)}{\inf } \underline{a} \right)\psi_0(x) \geq \left(\underset{(-x_0,x_0)}{\inf } \underline{a} \right)\psi_0(x).
\end{align*}  Now we suppose that $2 \kappa_0 \epsilon^3 < 15 x_0 $, for $|x | \geq \sqrt{1-\frac{2\kappa_0 \epsilon^3}{15x_0}}x_0$, we clearly have \begin{align*}
\overline{\mathcal{L}}\psi_0(x)  \geq \left(\underset{(-x_0,x_0)}{\inf } \underline{a} \right)\psi_0(x),
\end{align*} and for $|x | < \sqrt{1-\frac{2\kappa_0 \epsilon^3}{15x_0}}x_0$, we have $\sqrt{\psi_0(x)} \geq \frac{2\kappa_0 \epsilon^3}{15 x_0}$ which yields $-\frac{4}{x_0} \sqrt{\psi_0(x)} \geq -\frac{30}{\kappa_0 \epsilon^3}$, from which we deduce that \begin{align*}
\overline{\mathcal{L}}\psi_0(x)  \geq \left(-\frac{30}{\kappa_0 \epsilon^3}+\underset{(-x_0,x_0)}{\inf } \underline{a} +\frac{8 \kappa_0 \epsilon^3}{15 x_0^2} \right) \psi_0(x). 
\end{align*} So for $\beta_0=-\frac{30}{\kappa_0 \epsilon^3}+\underset{(-x_0,x_0)}{\inf } \underline{a}  $, we obtain that $   \overline{\mathcal{L}} \psi_0(x)  \geq \beta_0 \psi_0(x) $. With the relation $\partial_t \overline{M}_{t} \psi_0 =\overline{M}_{t}  \overline{\mathcal{L}}\psi_0$ and the positivity of the propagator, we obtain $\partial_t \overline{M}_{t}  \psi_0 \geq \beta_0 \overline{M}_{t}  \psi_0$, and we deduce that, from the Grönwall's lemma, \begin{align*}
 \overline{M}_{t} \psi_0 \geq e^{\beta_0t} \psi_0. 
\end{align*} We  set  $\overline{\psi}=\overline{M}_T\psi_0$  to have that for all $t  \geq u \geq 0$\begin{align*}
    M_{u,u+t} \overline{\psi} \geq \overline{M}_{t}\overline{M}_T \psi_0 \geq e^{\beta_0 t}  \overline{\psi} .
\end{align*} We note that, because of Lemma \ref{lemposiv},  $\overline{\psi}>0$ . \\

\textbf{Assumption (B1)}  Let us take $s \in (0,T)$,
for $V=1$, we have $ \mathcal{L}_s V(x) \leq \overline{a}(x)V(x)+ \widehat{Q}  $.  Let us  take $r_0>0$ such that for all $|x| \geq r_0$ we have $ \overline{a}(x) \leq -\widehat{Q}+\beta_0-1:=-\widehat{Q} + \alpha_0$. So we have \begin{align*}
\mathcal{L}_s V (x) \leq \alpha_0 V(x)+ (A-\alpha_0+\widehat{Q})\mathbb{1}_{(-r_0,r_0)}(x).
\end{align*} So, if we chose $x_0 \geq 2 \sqrt{r_0}$ in the definition of $\psi_0$ we get $\mathbb{1}_{(-r_0,r_0)} \leq 4\psi_0$, so we finally obtain \begin{align*}
\mathcal{L}_s V (x) \leq \alpha_0 V(x)+\theta_0 \psi_0(x),  
\end{align*} with $\theta_0=4(A+\widehat{Q)}$. For the function $ \phi=V-\theta_0\psi_0$, we obtain $  \mathcal{L}_s \phi \leq  \alpha_0 \phi$ which gives $ M_{s,s+T}\phi \leq e^{\alpha_0 T} \phi $. We finally obtain \begin{align*}
 M_{s,s+T} V \leq e^{\alpha_0 T} V + \theta_0 M_{s,s+T}\psi_0 = \frac{1+e^{\alpha_0T}}{2}V+ \left(\theta_0 M_{s,s+T}\psi_0-\frac{1-e^{\alpha_0T}}{2}V \right).
\end{align*} With the Duhamel formula (\ref{2.5})  we have \begin{align*}
 M_{s,s+T}\psi(x)=\psi(x+T -s)e^{ \int_{s}^{s+T}{a(\tau,x+\tau-s)d\tau}} + \int_{s}^{s+T}{ e^{\int_{s}^{\tau}{a(\tau',x+\tau'-s)d\tau'}} \int_{\mathbb{R}}^{}{M_{\tau,s+T }\psi(y)Q(x+ \tau-s,dy)}d\tau}. 
\end{align*}  Since, $ \mathcal{L}_sV \leq \overline{a}+\widehat{Q}$ we obtain  for all $\tau \in [s,s+T]$ 
\begin{align} \label{petit}
M_{\tau,s+T }\psi_0(x) \leq M_{\tau, s+T } V(x) \leq e^{(\overline{a}+\widehat{Q})(s+T -\tau)}V  \leq e^{(\overline{a}+\widehat{Q}) V} := \delta V,
\end{align} so, we have \begin{align*}
 M_{s,s+T}\psi_0(x) \leq \psi_0(x+t-s)e^{ \int_{s}^{s+T}{a(\tau,x+\tau-s)d\tau}} + \widehat{Q}\int_{s}^{s+T}{ e^{\int_{s}^{\tau}{a(\tau',x+\tau'-s)d\tau'}}e^{(\overline{a}+\widehat{Q})(t-\tau)}d\tau}. 
\end{align*} by dominated convergence we have $ \lim\limits_{x \rightarrow +\infty} \frac{M_{s,s+T}\psi_0(x)}{V(x)} = 0. $  So we can find $C>0$ and $R>r_0$ such that \begin{align*}
 \theta_0 M_{s,s+T}\psi_0-\frac{1-e^{\alpha_0T}}{2}V \leq  C \mathbb{1}_{[-R,R]},
\end{align*} which gives (B1)  with $K=(-R,R)$. \\

 \textbf{Assumption (B0) } We take $\psi=\delta^{-1}\overline{\psi}$ which preserves (B1) and (B2) and gives the relation $\psi \leq V$ from (\ref{petit}).  \\

\textbf{Assumption (B4)} Let us take $s\geq 0$ and $u \in [s,s+T]$.
We want to prove that $  M_{s,s+kT}\psi  \lesssim   M_{u,+u+kT}\psi $ in  $K=(-R,R)$ with $r_0>0$ defined at the end of the proof of Assumption~(B1) above. From Lemma \ref{lemma imp} we have $M_{s,s+kT}\psi \lesssim  M_{s,u+kT}\psi $ on $K$, so it is sufficient to prove that $M_{s,u+kT}\psi  \lesssim M_{u,u+kT}\psi$ on $K$.  

\begin{align*}
    M_{s,u+kT}\psi(x) &= \psi(x+kT+u-s)e^{\int_{s}^{u+kT}a(\tau,x+\tau-s) d\tau}+\int_{s}^{u+kT}e^{\int_{s}^{\tau}a(\tau',x+\tau'-s) d\tau'} \int_{\mathbb{R}}^{}M_{\tau,u+kT}\psi(y) Q(x+\tau-s,dy)d\tau \\
    &\leq \psi(x+kT) e^{\int_{u}^{u+kT+(u+s)}a(\tau+s-u,x+\tau-u) d\tau}\\
    &\hspace{1cm} + C_1 \int_{s}^{u+kT}e^{\int_{s}^{\tau}a(\tau',x+\tau'-s) d\tau'} \int_{\mathbb{R}}^{}M_{\tau,u+kT}\psi(y) Q(x+\tau-u,dy)d\tau \\
    & \leq C e^{AT}\psi(x+kT) e^{\int_{u}^{u+kT}a(\tau,x+\tau-u) d\tau}\\
    &\hspace{1cm} + C_1\int_{s}^{u+kT}e^{\int_{s}^{u}a(\tau',x+\tau'-s) d\tau'} e^{\int_{u}^{\tau}a(\tau',x+\tau'-s) d\tau'} \int_{\mathbb{R}}^{}M_{\tau,u+kT}\psi(y) Q(x+\tau-u,dy)d\tau \\
    &\leq C e^{AT}\psi(x+kT) e^{\int_{u}^{u+kT}a(\tau,x+\tau-u) d\tau}\\
    &\hspace{1cm} + C_1 e^{A T}  \int_{s}^{u+kT}e^{\int_{u}^{\tau+u-s}a(\tau'-u+s,x+\tau'-u) d\tau'} \int_{\mathbb{R}}^{}M_{\tau,u+kT}\psi(y) Q(x+\tau-u,dy)d\tau \\
    &\leq C e^{AT}\psi(x+kT) e^{\int_{u}^{u+kT}a(\tau,x+\tau-u) d\tau}\\*
    &\hspace{1cm} + C_1 C e^{A T} \int_{s}^{u+kT}e^{\int_{u}^{\tau+u-s}a(\tau',x+\tau'-u) d\tau'} \int_{\mathbb{R}}^{}M_{\tau,u+kT}\psi(y) Q(x+\tau-u,dy)d\tau \\
    &\leq C e^{AT}\psi(x+kT)  e^{\int_{u}^{u+kT}a(\tau,x+\tau-u) d\tau}\\
    &\hspace{1cm} + C_1C e^{2A T} \int_{s}^{u+kT}e^{\int_{u}^{\tau}a(\tau',x+\tau'-u) d\tau'} \int_{\mathbb{R}}^{}M_{\tau,u+kT}\psi(y) Q(x+\tau-u,dy)d\tau.
\end{align*} So, to conclude the proof we only need to check that 
\begin{align*}
    I&:=\int_{s}^{u}e^{\int_{u}^{\tau}a(\tau',x+\tau'-u) d\tau'} \int_{\mathbb{R}}^{}M_{\tau,u+kT}\psi(y) Q(x+\tau-u,dy)d\tau \\
    & \leq \int_{u}^{u+(k+1)T}e^{\int_{u}^{\tau}a(\tau',x+\tau'-u) d\tau'} \int_{\mathbb{R}}^{}M_{\tau,u+(k+1)T}\psi(y) Q(x+\tau-u,dy)d\tau.
\end{align*} First, we note that by continuity of $a$, there exist $\delta>0$ such that, for all $\tau \in (s+T,u+T ) $, we have  $e^{\int_{\tau-T}^{\tau}a(\tau',x+\tau'-u)d\tau'} \geq \delta $.
With the change of variable $\tau \leftarrow \tau+T$ we obtain

\begin{align*}                   
   I &= \int_{s+T}^{u+T}e^{\int_{u}^{\tau-T}a(\tau',x+\tau'-u) d\tau'} \int_{\mathbb{R}}^{}M_{\tau,u+(k+1)T}\psi(y) Q(x+\tau-T-u,dy)d\tau \\ 
   &\leq \frac{1}{\delta}C_1\int_{u}^{u+(k+1)T}e^{\int_{u}^{\tau}a(\tau',x+\tau'-u) d\tau'} \int_{\mathbb{R}}^{}M_{\tau,u+(k+1)T}\psi(y) Q(x+\tau-u,dy)d\tau. \\
\end{align*} So we set $\tilde{C}=\max\left(\frac{Ce^{AT}}{2},C_1Ce^{2AT},\frac{1}{\delta}C_1 \right)$ and we finally get \begin{align*}
    M_{s,u+kT}\psi(x) \leq \tilde{C} \left( M_{u,u+kT}\psi(x)+M_{u,u+(k+1)T} \psi(x) \right). 
\end{align*} We obtain the result because in $K$, we have $M_{u+(k+1)T}\psi \lesssim M_{u,u+kT}\psi  $. \\

\end{proof}

\section{ Application to the growth-fragmentation equation}

\vspace{0.1cm}

We apply our method to the following non-local equation
\begin{equation}\label{gro}
\left\{\begin{array}{l}
    \displaystyle\partial_t u(t,x) +   \partial_x (g(t,x) u(t,x)) +\beta(t,x)u(t,x) =\int_{x}^{\infty} b(t,y,x) u(t,y)dy  , \quad (t,x)\in (s,\infty) \times \mathbb{R}_+, \vspace{2mm}\\
    u(s,x)=u_s(x),
\end{array}\right.
\end{equation}
where the fragmentation kernel is of the form \begin{align*}
 b(t,x,y)= \frac{1}{x}\kappa\left(\frac{y}{x} \right)\beta(t,x),   
\end{align*} where $\kappa$ is the fragmentation distribution which verifies $\kappa(\cdot) \geq \underline{\kappa} >0$ for $\underline{\kappa}>0 $ and is continuous. For any $k\geq0$, we set \begin{align*}
 \eta_k=  \int_{0}^{1}z^k \kappa(z)dz. 
\end{align*} We note that the conservation of mass during the fragmentation leads to impose $\eta_1=1$, so we have $\eta_0>1$. We also note that theses assumptions on $\kappa$ leads that\begin{align}\label{major}
    \forall \alpha>1,\qquad \eta_\alpha <1 .
 \end{align}
 Indeed, $ \eta_\alpha= 1+\int_{0}^{1} (z^\alpha-z)\kappa(z)dz \leq 1+\underline{\kappa}\int_{0}^{1} (z^\alpha-z)dz =1+\frac{1-\alpha}{2(\alpha+1)}\underline{\kappa} <1 $.
In this section we detail an example for illustrative but non-exhaustive purposes, we study the case $g(t,x)=g_0(t)+g_1(t)x $ where $g_0,g_1$ are $T$-periodic continuous functions and $\beta(t,x)= \beta_0(t)+\beta_1(t) x$ where $\beta_0,\beta_1$ are $T$-periodic continuous functions. We also assume that there exists $c>0$ such that  $g_0(t), \beta_1(t) \geq c. $

\begin{monTheoreme}\label{th:croiss-frag}  Let us take $V=1+x^\alpha$ with $\alpha>1$. 
There exist constants $C,\omega > 0$ and a unique T-periodic Floquet family   $(\gamma_{s} , h_{s}, \lambda_F)_{s\leq t} \subset \mathcal{M}(V) \times \mathcal{B}(V) \times \mathbb{R}$ with $\langle \gamma_{s}, h_{s} \rangle =1$ for all $s \geq 0$ and $ \|h_{s_0} \|_ {\infty}=1$ for a certain $s_0 \geq 0$,  such that for any initial condition $u_s \in L^1(\mathbb{R}_+,dx)$ the corresponding solution $u(t,x)$ of equation (\ref{gro}) verifies for all $t \geq s$

$$ \left\| u(t,.)e^{-\lambda_F(t-s)}-\left(\int_{0}^{\infty}{h_{s,s}(x) u_s(x)dx}\right)\gamma_{s,t} \right\|_{L^1(\mathbb{R}_+)} \leq C \left\|  u_s \right\|_{L^1(\mathbb{R}_+)} e^{-\omega (t-s)}.$$
\end{monTheoreme}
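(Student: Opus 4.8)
The plan is to apply the abstract Theorem~\ref{thm1.2} to the semiflow $(M_{s,t})_{t\geq s\geq0}$ associated with the growth-fragmentation equation~(\ref{gro}), exactly as was done for the selection-mutation model, but this time verifying \emph{Assumption~A} for the time-$T$ operator $M_{s_0,s_0+T}$ rather than Assumption~B (Assumption~A being the natural framework here, since the growth term is unbounded in $x$ and the clean contraction structure of Assumption~B is harder to obtain). First I would set up the functional-analytic framework: define the semiflow via a Duhamel formula and the Banach fixed-point theorem along short time intervals, as in Section~2, establish the dual relation $\langle\mu M_{s,t},f\rangle=\langle\mu,M_{s,t}f\rangle$, the generator identities $\partial_t M_{s,t}f=M_{s,t}\mathcal L_t f$ and $\partial_s M_{s,t}f=-\mathcal L_s M_{s,t}f$, and the fact that $(\mu M_{s,t})$ is the unique measure solution of~(\ref{gro}). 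Because the coefficients are affine in $x$, the characteristics of the transport part can be integrated explicitly, which makes all these steps routine though lengthy.

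The heart of the proof is the verification of (A1)--(A4) for $M:=M_{s_0,s_0+kT}$ for a suitable integer $k$, with an appropriate Lyapunov pair $(V,\psi)$. For $V$ I would take a weight of the form $V(x)=1+x^p$ for some $p>1$; using $\eta_p<1$ from~(\ref{major}) together with the coercivity $\beta_1(t)\geq c>0$, a direct computation of $\mathcal L_t V$ shows that the fragmentation gain is dominated by the loss for large $x$, giving the geometric drift $M^kV\le\alpha V+\theta\mathbb 1_K\psi$ on a compact set $K=[0,R]$ (this is the (A1) estimate, obtained by Grönwall after bounding $\mathcal L_t V\le -\lambda V+\Theta\mathbb 1_K$). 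For $\psi$ I would build a strictly positive subsolution supported near the origin, using the lower bound $\kappa\ge\underline\kappa>0$ and $g_0(t)\ge c$ to guarantee mass is created at small sizes, obtaining $M^k\psi\ge\beta\psi$ with $\beta>\alpha$ after possibly enlarging $k$ or renormalizing. The Doeblin-type minorization (A3) on $K$ follows from a strong-positivity lemma analogous to Lemma~\ref{lemposiv}: iterating the Duhamel formula, the fragmentation kernel spreads mass from any size interval down to any other, uniformly, so $M^k(f\psi)\gtrsim\langle\nu,f\rangle$ on $K$ for a suitable probability measure $\nu$ supported in $K$.

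The step I expect to be the main obstacle is condition (A4), the uniform comparison $\langle\nu,M^{nk}\psi/\psi\rangle\ge d\sup_K M^{nk}\psi/\psi$, because — as the $\sin(t-x)$ example in Section~1 illustrates — periodicity alone does not control the ratio $M_{s,s+nT}\psi(x)/M_{u,u+nT}\psi(x)$ over varying base times. Here, rather than reproving (A4) by hand, the cleanest route is to check the stronger \emph{Assumption~B} with $\tau=kT$, whose condition (B4) is precisely the needed uniform bound on $M_{s_0,s_0+n\tau}\psi/M_{s,s+n\tau}\psi$; by Theorem~\ref{periodicité} this implies Assumption~A for $M_{s,s+T}$. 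Condition (B4) can be verified using the affine structure: along characteristics the exponential weights $\exp(\int\beta)$ depend on the base time only through a bounded shift (by $T$-periodicity and continuity of $\beta_0,\beta_1$ on $[0,T]$), and the transport flow is a deterministic affine map, so the ratio of the two propagated quantities is bounded by a constant independent of $n$ — this uses crucially that $\beta_1(t)\ge c>0$ makes the dynamics contracting enough that the discrepancy does not accumulate, unlike in the pathological example. Condition (B5) follows, as in the selection-mutation case, by an induction on the number of fragmentation events using $\kappa\ge\underline\kappa$. Once Assumption~A holds for $M_{s_0,s_0+T}$ and $(s,t)\mapsto\|M_{s,t}V\|_{\mathcal B(V)}$ is locally bounded (immediate from the drift estimate), Theorem~\ref{thm1.2} yields the Floquet family $(\gamma_s,h_s,\lambda_F)$ and the exponential contraction in $\mathcal M(V)$; restricting to $V=1+x^p$ and $L^1$ initial data, and writing $\gamma_{s,t}:=\gamma_t$, $h_{s,t}:=h_s$, gives exactly the stated $L^1$ convergence estimate, with $\|\cdot\|_{\mathcal M(V)}$ controlled by $\|\cdot\|_{L^1}$ on the relevant cone.
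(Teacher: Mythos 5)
Your overall frame (build the semiflow by Duhamel, verify a Harris-type assumption for the period map, invoke Theorem~\ref{thm1.2}) matches the paper, but you miss the one idea that makes the proof work, and the step you yourself flag as ``the main obstacle'' is left unresolved. The paper does \emph{not} fight through (A4)/(B4) for a generic Lyapunov pair. It exploits the affine structure of $g(t,x)=g_0(t)+g_1(t)x$ and $\beta(t,x)=\beta_0(t)+\beta_1(t)x$ to construct an \emph{exact} periodic Floquet eigenfunction in closed form: one looks for $h_t(x)=u_{-t}+v_{-t}x$, which reduces $\partial_s h_s=(\lambda_F-\mathcal L_s)h_s$ to a $2\times2$ linear periodic ODE system for $(m_t,n_t)$; the monodromy matrix $\Xi_T$ is shown to be strictly positive (using $g_0,\beta_1\geq c>0$ and $\eta_0>1$), and Perron--Frobenius yields $\Lambda>0$ and a positive eigenvector, hence $\lambda_F=\tfrac{\log\Lambda}{T}$ and a positive affine $\psi=h_0$ with $M_{s,s+T}\psi=e^{\lambda_F T}\psi$ \emph{exactly}. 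With this $\psi$, (A2) is an identity and (A4) holds with $d=1$ because $M_{s,s+nT}\psi/\psi\equiv e^{n\lambda_F T}$ is constant; (A1) then follows from $\mathcal L_sV\le\delta V+\theta_0\mathbb 1_{[0,R]}\psi$ with $V=1+x^\alpha$ and $\eta_\alpha<1$, and (A3) is a Doeblin minorization from the Duhamel formula. There is no need for Assumption~B at all.

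The gap in your route is precisely (B4). You assert that the ratio $M_{s_0,s_0+n\tau}\psi/M_{s,s+n\tau}\psi$ stays bounded because ``$\beta_1(t)\ge c>0$ makes the dynamics contracting enough that the discrepancy does not accumulate,'' but this is not an argument. The exponent $\int_s^{s+n\tau}\beta(\tau',X_{s,\tau'}(x))\,d\tau'$ depends on the base time both through the integrand and through the characteristics $X_{s,\cdot}(x)$, which grow like $e^{\int g_1}$; a shift of the base time can a priori change this exponent by an amount growing linearly in $n$, which is exactly the failure mode of the $\sin(t-x)$ counterexample in Section~1. In the selection-mutation model the analogous control required the explicit structural hypotheses (\ref{a1}) and (\ref{Q12}) on the coefficients and a full page of computation; here you state no counterpart and give none. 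Likewise, your proposed $\psi$ (``a strictly positive subsolution supported near the origin'') is internally inconsistent as stated and would still leave you to prove $\beta>\alpha$ in (A2) by hand. The lesson to take away is that for coefficients affine in $x$, the correct move is to solve the Floquet eigenproblem explicitly on the two-dimensional invariant space of affine functions, which collapses the hard conditions (A2) and (A4) to identities.
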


\pg{ This result allows, in particular, to complete  a result of~\cite{gabriel2011long} which states that, for $g_1=\beta_0=0$, $\beta_1$ independent of $t$, and $\kappa$ symmetric, we have the comparison
\[\overline{\lambda(g_0)}\leq\lambda_F\leq \lambda(\overline{g_0}),\]
where, for a function $a(t)$ $T$-periodic we use the notation
\[\overline a = \frac1T\int_0^T a(s)ds,\]
and, for any positive constant $g_0$, $\lambda(g_0)$ is the principal eigenvalue of the operator
\[\mathcal G u (x) = -g_0 u'(x) - \beta_1 x u(x) + \beta_1 \int_x^\infty u(y)\kappa\Big(\frac xy\Big)dy.\]
Indeed, this result is proved in~\cite[Proposition~6.2]{gabriel2011long} under the condition that the Floquet eigenelements exist, but the author explains that his method does not allow him to ensure this existence.}

\

\pg{Similarly as in Section 2, we associate to the PDE a semiflow through the Duhamel formula}
\begin{align}\label{eq:Duhamel_frag}
    M_{s,t}f(x)=f\left(X_{s,t}(x) \right) & \, e^{-\int_{s}^{t} \beta(\tau,X_{s,\tau}(x))d\tau } \\
    & + \displaystyle \int_{s}^{t}e^{-\int_{s}^{u} \beta(\tau',X_{s,\tau'}(x))d\tau' } \beta(\tau,X_{s,\tau}(x)) \displaystyle\int_{0}^{1}M_{\tau,t}f(zX_{s,\tau}(x))\kappa(z)dzd\tau \nonumber
    \end{align} 
    where \begin{align*}
    X_{s,t}(x)=xe^{\int_{s}^{t}g_1}+\int_{s}^{t}g_0\pg{(\tau)e^{\int_s^\tau g_1}d\tau},
    \end{align*} is the solution to the characteristic equation \begin{align*}
\left\{
\begin{array}{rcr}
\partial_s X_{s,t}(x) =& -g(s,X_{s,t}(x))\\
X_{t,t}(x) =&  x.
\end{array}
\right.
\end{align*} \pg{The construction of this semiflow through a fixed-point argument is very similar to the construction in Section~2, and we do not repeat it for the sake of conciseness.} We \pg{also} refer to \cite{bansaye2019non} for such a construction but with coefficients independent of time.

\

\pg{For proving Theorem~\ref{th:croiss-frag}}, we will apply Theorem \ref{thm1.2}, by finding directly the eigenvectors of the generator $\mathcal{L}_t$ defined by \begin{align*}
     (\mathcal{L}_t f)(x)&= g(t,x)f'(x)+\beta(t,x)\displaystyle \int_{0}^{1}\kappa\left( z\right)f(zx)dz-\beta(t,x)f(x) 
\end{align*}which verifies the relation $\partial_s M_{s,t}f= - \mathcal{L}_s M_{s,t}f$ for all $f \in \mathcal{C}^1 \cap \mathcal{B}(V)$ with $V(x)=1+x^\alpha$ for a certain $ \alpha>1$. More precisely, we search a family $(h_t)_{t \geq 0 } $ of $ \mathcal{C}^1 $ functions, and a constant $\lambda_F>0$   which verify $h_s=e^{\lambda_F(s-t)}M_{s,t}h_t$. This equality gives  \begin{align*}
\partial_s h_s= (\lambda_F- \mathcal{L}_s)h_s. 
\end{align*}

\pg{
\begin{lemma}
There exist $\lambda_F\in\mathbb R$ and $t\mapsto u_t, t\mapsto v_t$ two $T$-periodic functions with  $\mathcal C^1$ regularity such that
\[h_s(x):=u_{-s}+v_{-s} x\qquad\text{satisfies}\qquad h_s=e^{\lambda_F(s-t)}M_{s,t}h_t.\]
\end{lemma}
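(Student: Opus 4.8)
## Proof plan

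The plan is to make the ansatz $h_s(x) = u_{-s} + v_{-s}\,x$ and reduce the functional equation $h_s = e^{\lambda_F(s-t)}M_{s,t}h_t$ to a system of ODEs for the pair $(u_t, v_t)$, together with a scalar eigenvalue condition for $\lambda_F$. First I would observe that, since we expect $h_s$ to satisfy $\partial_s h_s = (\lambda_F - \mathcal L_s)h_s$, it suffices to solve this linear first-order PDE in the affine class of functions. Applying $\mathcal L_s$ to an affine function $x\mapsto a + bx$ and using $\int_0^1 \kappa(z)\,dz = \eta_0$ and $\int_0^1 z\kappa(z)\,dz = \eta_1 = 1$, one gets
\begin{align*}
\mathcal L_s(a+bx) &= g(s,x)\,b + \beta(s,x)\bigl(a\eta_0 + bx\bigr) - \beta(s,x)(a+bx)\\
&= b\,g_0(s) + b\,g_1(s)x + a(\eta_0-1)\bigl(\beta_0(s)+\beta_1(s)x\bigr),
\end{align*}
which is again affine in $x$. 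Writing $h_s(x) = u_{-s} + v_{-s}x$ and matching the constant and linear coefficients in $\partial_s h_s(x) = (\lambda_F - \mathcal L_s)h_s(x)$ (note $\partial_s h_s(x) = -\dot u_{-s} - \dot v_{-s}x$, where the dot denotes derivative with respect to the argument), I would obtain the coupled linear ODE system
\begin{align*}
-\dot u_{-s} &= \lambda_F u_{-s} - v_{-s}g_0(s) - u_{-s}(\eta_0-1)\beta_0(s),\\
-\dot v_{-s} &= \lambda_F v_{-s} - v_{-s}g_1(s) - u_{-s}(\eta_0-1)\beta_1(s).
\end{align*}
After the change of variable $t = -s$ this becomes an autonomous-in-form linear system $\dot{(u,v)}^{\mathsf T} = A_t (u,v)^{\mathsf T} - \lambda_F (u,v)^{\mathsf T}$ with $T$-periodic coefficient matrix $A_t$ built from $g_0,g_1,\beta_0,\beta_1$ and $\eta_0$.

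The next step is to invoke Floquet theory for this $2\times 2$ linear periodic ODE system. The system without the $\lambda_F$ term has a monodromy matrix $\Phi(T)$ over one period; I would check that $\Phi(T)$ is a positive matrix (entrywise), which follows from the sign conditions $g_0(t), \beta_1(t)\geq c > 0$ and $\eta_0 - 1 > 0$: the off-diagonal coupling terms $v\,g_0(s)$ and $u(\eta_0-1)\beta_1(s)$ are strictly positive, so the system is cooperative and irreducible, hence by Perron--Frobenius the monodromy matrix has a simple dominant eigenvalue $e^{\mu T}$ with $\mu\in\mathbb R$ and a strictly positive eigenvector. Setting $\lambda_F := \mu$ then produces, via the associated Floquet solution, a $T$-periodic pair $(u_t,v_t)$ with $u_t, v_t > 0$ for all $t$; these inherit $\mathcal C^1$ regularity from the continuity of $g_0,g_1,\beta_0,\beta_1$ by the ODE itself. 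Finally I would verify that the affine function $h_s(x) = u_{-s} + v_{-s}x$, built from this solution, genuinely satisfies $h_s = e^{\lambda_F(s-t)}M_{s,t}h_t$: since both sides solve the same backward transport equation $\partial_s(\cdot) = -\mathcal L_s(\cdot)$ in $s$ with the same terminal value at $s=t$, and the semiflow representation via the Duhamel formula \eqref{eq:Duhamel_frag} is well-defined on affine functions (which lie in $\mathcal B(V)$ for $\alpha > 1$), uniqueness of the fixed point forces equality.

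The main obstacle I anticipate is not the algebra but the verification that one may legitimately pass between the PDE statement $h_s = e^{\lambda_F(s-t)}M_{s,t}h_t$ and the infinitesimal statement $\partial_s h_s = (\lambda_F-\mathcal L_s)h_s$, i.e.\ that the affine function indeed lies in the domain where $\partial_s M_{s,t}f = -\mathcal L_s M_{s,t}f$ holds, and that the Duhamel integral in \eqref{eq:Duhamel_frag} converges when applied to $f(x) = u + vx$ — this requires the weight $V(x) = 1+x^\alpha$ with $\alpha > 1$ to control $M_{\tau,t}f$, using $\eta_\alpha < 1$ from \eqref{major} to get $M_{\tau,t}V \lesssim V$. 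A secondary point requiring care is checking strict positivity of $u_t$ and $v_t$ (needed later for the eigenfunction to belong to $\mathcal B_+(V)$ and $\mathcal C^1_b$), which is exactly where the positivity assumptions $g_0,\beta_1\geq c>0$ and $\eta_0>1$ enter through the Perron--Frobenius argument on the monodromy matrix.
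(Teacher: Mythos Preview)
Your proposal is correct and follows essentially the same approach as the paper: reduce to an affine ansatz, derive the $2\times2$ periodic linear ODE system, establish strict positivity of the period map (you via the cooperative/irreducible structure, the paper via a direct monotonicity argument), apply Perron--Frobenius to extract $\lambda_F$ and the positive eigenvector, and then rescale to obtain the $T$-periodic pair $(u_t,v_t)$. Your additional discussion of the passage between the infinitesimal equation and the semiflow identity, and of the well-posedness of the Duhamel integral on affine functions, is more explicit than what the paper provides but does not alter the strategy.
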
}

\begin{proof}
We look for $\lambda_F\in\mathbb R$ and periodic functions $u_t$ and $v_t$ such that $h_t$ defined by $h_t(x)=u_{-t}+v_{-t}x$ satisfies $\partial_s h_s= (\lambda_F- \mathcal{L}_s)h_s$.
We start by looking for particular solutions to the equation \begin{align*}
\partial_s \varphi_s= - \mathcal{L}_s\varphi_s
\end{align*} under the form $\varphi_t(x)= m_{-t}+ n_{-t}x$. Such solutions need to verify the following ODE system

 \begin{align}\label{edo}
\begin{pmatrix} 
         \dot m_t \\ 
         \dot n_t
   \end{pmatrix} = \begin{pmatrix} 
          \beta_0(-t) (\eta_0-1)&g_0(-t) \\ 
         \beta_1(-t) (\eta_0-1)&g_1(-t)
   \end{pmatrix}  \cdot \begin{pmatrix} 
         m_t\\ 
         n_t
   \end{pmatrix} := A(t) \begin{pmatrix} 
         m_t\\ 
         n_t
   \end{pmatrix}. 
    \end{align}
    Let us define the flow mapping \begin{align*}
    \Xi_t:  
   \begin{pmatrix} 
         n_0 \\ m_0
   \end{pmatrix} \longrightarrow \begin{pmatrix} 
         n_t \\ m_t
   \end{pmatrix}, 
    \end{align*} where $n_t,m_t  $ are solution of (\ref{edo}).
  \pg{If we can prove that $\Xi_T$  is a strictly positive matrix, then the Perron-Frobenius theorem ensures that} $\Xi_T$ admits  $\Lambda>0$ and $\begin{pmatrix} 
         u_0 \\ v_0
   \end{pmatrix} > 0$ such that \begin{align*}
    \Xi_T \begin{pmatrix} 
         u_0 \\ v_0
   \end{pmatrix} = \Lambda\begin{pmatrix} 
         u_0 \\ v_0
   \end{pmatrix},  
   \end{align*} \pg{and we easily check that the result follows by setting
   \[\lambda_F=\frac{\log\Lambda}{T}\qquad \text{and}\qquad 
   \begin{pmatrix} 
         u_t\\ 
         v_t
   \end{pmatrix}
   = e^{-\lambda_F t}\,\Xi_t  \begin{pmatrix} 
         u_0 \\ v_0
   \end{pmatrix}.\]}
   
   Let us then prove that $\Xi_t$ has strictly positive entries for any $t>0$.
   Consider that $m_0>0$ and $n_0 \geq 0$.
   By continuity of $ t \mapsto \begin{pmatrix} 
         n_t \\ m_t
   \end{pmatrix}  $, we have that there exists $ \epsilon>0$ such that $m_t>0$ for $t \in [0,\epsilon)$. Let us set  $w_t= e^{-\int_{0}^{t}g_1(-u)du}n_t,$  we have \begin{align*}
   \dot w_t=\beta_1(-t)(\eta_0-1)e^{-\int_{0}^{t}g_1(-u)du}m_t \geq c  (\eta_0-1)e^{-\int_{0}^{t}g_1(-u)du}m_t, 
\end{align*} and we obtain that  $\dot w_t >0 $ in $[0,\epsilon)$. We deduce that $n_t >0$ for any $t \in (0,\epsilon)$. Let us consider the set \begin{align*}
 X=  \big\{  T>0 \ | \ n_s>0, \ m_s>0, \ \forall s \in (0,T)  \big\}.
\end{align*} We clearly have $\epsilon \in X$. Let us suppose that $\sup X<\infty$, so there exist $T^*>0$ such that $ n_s>0$ and $m_s>0$ for any $ s \in (0,T^*)$.
As the functions are increasing we clearly have $u_{T^*},v_{T^*}>0$ and by continuity, there exist $\delta>0$ such that $u_s>0$ and $v_s>0$ for all  $s\in (0,T^*+\delta)$, and we obtain that $\sup X=\infty$, so $\Xi_t$ is positive. 
\end{proof}

We will now prove that the operator $M_{s,s+T}$  verifies Assumption A with  $\psi=h_0$ and $V(x)=1+x^\alpha$, $\alpha>1$.

\begin{proof}[Proof \pg{of Theorem~\ref{th:croiss-frag}}]
The proof consists in verifying Assumption~A and then apply Theorem~\ref{thm1.2} to conclude.

\medskip

\textbf{Assumption (A1)} 
We compute \begin{align*}
    \mathcal{L}_sV(x) &= x^{\alpha} \left( \frac{\alpha\tau_0(s)}{x}+\alpha\tau_1(s)+\frac{\beta_0(s)(\eta_\alpha-1)}{x^\alpha}  +\beta_1(s)x(\eta_0-1)\right) \\
    &\leq  x^{\alpha} \left( \frac{\alpha\tau_0(s)}{x}+\alpha\tau_1(s)+\frac{\beta_0(s)(\eta_0-1)}{x^\alpha} +c x(\eta_\alpha-1)\right). 
\end{align*}  We clearly have $\lim\limits_{x \rightarrow +\infty} \frac{\alpha\tau_0(s)}{x}+\alpha\tau_1(s)+\frac{\beta_0(s)(\eta_0-1)}{x^\alpha} +c x(\eta_\alpha-1)=-\infty $, since $\eta_\alpha<1$, so we can find $x_1>0$ and $ \delta<0$ such that \begin{align*}
\forall x \geq x_1, \quad  \mathcal{L}_sV(x) \leq \delta x^\alpha = \delta V(x) - \delta.
\end{align*}  By continuity of the function $\psi$, we deduce that there exist $\theta_0>0 $ and $R>0$ such that  \begin{align*}
 \forall x \geq 0, \quad  \mathcal{L}_sV(x) \leq \delta V(x) +\theta_0 \mathbb{1}_{K}  \psi
\end{align*} where $K=[0,R]$.  For the function $\phi=V-\theta_0\mathbb{1}_{[0,R]} \psi$, we obtain $ \mathcal{L}_s \phi \leq \delta \phi$ which gives $M_{s,s+T} \phi \leq e^{\delta T} \phi$. We finally obtain \begin{align*}
 M_{s,s+T}V \leq e^{\delta T}V + \theta_0 M_{s,s+T}\left(\mathbb{1}_{[0,R]} \psi \right).
\end{align*} By construction of $\psi=h_0$ we have $M_{s,s+T}\psi= e^{\lambda_FT}\psi$ and in particular $M_{s,s+T}\left(\mathbb{1}_{[0,R]} \psi \right)= e^{\lambda_FT} \mathbb{1}_{[0,R]}\psi$, which finally gives $  M_{s,s+T}V \leq e^{\delta T}V+e^{\lambda_F T}\theta_0 \mathbb{1}_{[0,R]} \psi. $ So we get (A1) with $ \alpha = e^{\delta T} < 1 $ and $\theta=e^{\lambda_F T}\theta_0$. \\

\textbf{Assumption (A2)} By construction of  $h_0$ we have  $M_{s_0,s_0+T}\psi= e^{\lambda_FT}\psi$, so we get (A2) with $\beta=e^{\lambda_FT} > 1 > \alpha$.  \\

\textbf{Assumption (A3)} 
We  would like to prove the Doeblin condition that is, for $s \geq 0$ and a compact $\mathcal{C}$, there exist  $k \in \mathbb{N}$, and a measure  $\nu \in \mathcal{P}(\mathcal{C}) $ such that \begin{align*}
M_{s,s+kT}f \geq c \langle \nu, f \rangle \mathbb{1}_\mathcal{C}.
\end{align*} Let us take $s \in \mathbb{R}_+$ and  $\mathcal{C}=[0,R] $ with $R$ defined on $(A1)$. For all $\tau \in [s,t]$ we have from~\eqref{eq:Duhamel_frag} \begin{align*}
M_{\tau,t} f(z X_{s,\tau}(x)) \geq f\left(  X_{\tau,t}( z X_{s,\tau}(x) )(x) \right) e^{-\int_{\tau}^{t} \beta  \left( \tau',  z X_{s,\tau'}(x) \right)d\tau'    } \geq  f\left(  X_{\tau,t}( z X_{s,\tau}(x) )(x) \right)  e^{-\int_{s}^{t} \beta_0(\tau')+\beta_1(\tau')zX_{s,\tau'}(x)d\tau' }. 
\end{align*}  Functions $g_0,g_1,\beta_0,\beta_1$ are continuous and periodic, so they are bounded and we deduce that there exists  $B \in \mathbb{R}$  such that  for all $x \in \mathcal{C}$ and $\tau' \in [s,t]$ we have $   \beta  \left( \tau',  z X_{s,\tau'}(x) \right)\leq B$. We deduce that for all $x \in \mathcal{C}$ we have $ e^{-\int_{s}^{t}  \beta_0(\tau')+\beta_1(\tau')zX_{s,\tau'}(x)d\tau'} \geq  e^{(s-t)B}  $. We obtain by using the Duhamel formula \eqref{eq:Duhamel_frag} again that \begin{align*}
    M_{s,t}f(x) &\geq  e^{(s-t)B} \underline{\kappa} c \int_{s}^{t}   \int_{0}^{1} f\left(  X_{\tau,t}( z X_{s,\tau}(x) )(x) \right) dz d\tau.
\end{align*} We have $  \partial_z X_{\tau,t}( z X_{s,\tau}(x) )(x) = X_{s,\tau}(x)e^{\int_{\tau}^{t}\tau_1} \leq X_{s,\tau}(R)  $. We note that there exists $a_1>0$ and $a_2>0$ such that for all $s \leq t_1 \leq t_2 \leq t$ we have, $ a_1  X_{s,t} \leq X_{t_1,t_2} \leq a_2X_{s,t} $.  By   the change of variable $u=X_{\tau,t}( z X_{s,\tau}(x) )(x)$ we obtain

\begin{align*}
    M_{s,t}f(x) & \geq    e^{(s-t)B} \underline{\kappa} c \int_{s}^{t} \int_{X_{\tau,t}(0)}^{X_{s,t}(x)} f(z) \frac{1}{a_2 X_{s,t}(R)}dzd\tau  \\
     & \geq  e^{(s-t)B} \underline{\kappa} c \int_{s}^{t} \int_{a_1X_{s,t}(0)}^{X_{s,t}(x)} f(z) \frac{1}{a_2 X_{s,t}(R)}dzd\tau   \\
     &\geq \frac{t-s}{a_2 X_{s,t}(R)} e^{(s-t)B} \underline{\kappa} c  \int_{a_1 X_{s,t}(0)}^{X_{s,t}(x)} f(z) dzd\tau  
\end{align*} So we proved \begin{align*}
  M_{s,t}f(x) \geq c_{s,t} \langle \nu_{s,t} ,f \rangle, 
\end{align*} with $c_{s,t} =   \frac{t-s}{a_2 X_{s,t}(R)} e^{(s-t)B} \underline{\kappa} c $ and $\nu_{s,t}(dx)= \mathbb{1}_{a_1X_{s,t}(0)<x< X_{s,t}(0)}dx$.\\

\textbf{Assumption (A4)} For all $n \in \mathbb{N}$ $\langle \nu, \frac{M_{s,s+nT}\psi}{\psi} \rangle  = e^{n\lambda_F T}=\frac{M_{s,s+nT}\psi}{\psi} $ which gives (A4) with $d=1$. 

\medskip

As announced at the beginning of the proof, the conclusion follows from applying Theorem~\ref{thm1.2}.
\end{proof}

\section{Conclusion and perspectives}

In this paper, we have extended the contraction approach performed in~\cite{bansaye2020ergodic} to the Harris setting and it allowed us to derive new results about the Malthusian asymptotic behavior of non-conservative and non-local periodic equations set on an unbounded domain, namely mutation-selection and growth-fragmentation models.
However, the method we developed has its limitations.
For verifying the condition~(B5), we strongly use the fact that the equations are set on one dimensional domains.
Indeed, in dimension one, the drift term guarantees that any position can be reached with positive probability.
Extending the results to higher dimensional selection-mutations models is left for future works.
Besides, in the case of the selection-mutations model, we were not able to make the mutation kernel depend on time. Considering such time-periodic mutations is also an interesting perspective of future work.

\medskip

Another interesting issue is the extension to the time periodic setting of the works~\cite{Henry2022,Patout2022,Henry2023} which aim at understanding the backward dynamics of the ancestral lineages in (one-dimensional) selection-mutations models.
The equations considered in~\cite{Patout2022,Henry2023} correspond exactly to Equation~\eqref{yop} in a time-homogeneous environment, and the authors take advantage of the result obtain in~\cite{cloez2020irreducibility}.
This makes me think that the methodology developed in the present paper could be used for extending these results to the case with time-periodic coefficients.
The papers~\cite{Henry2022,Patout2022,Henry2023} also provide some numerical illustrations of their theoretical results.
Numerical simulations in periodic environments would definitely be of great interest for understanding further the underlying dynamics of the individuals in the population.

\section*{Acknowledgements}
The author is very grateful to Bertrand Cloez and Pierre Gabriel for all the discussions on the subject and useful help that allowed improving the paper. He also acknowledges the support from the ANR project NOLO (ANR-20-CE40-0015), funded by the French Ministry of Research, and the funding of his PhD grant by the foundation Jacques Hadamard.

\bibliographystyle{abbrv}
\bibliography{b}

\end{document}